\documentclass[11pt]{amsart}

\usepackage{amsmath,amssymb,amsthm}
\usepackage{graphicx}
\usepackage[hidelinks]{hyperref}

\newtheorem{theorem}{Theorem}[section]
\newtheorem{proposition}[theorem]{Proposition}
\newtheorem{lemma}[theorem]{Lemma}
\newtheorem{corollary}[theorem]{Corollary}
\newtheorem{conjecture}[theorem]{Conjecture}
\theoremstyle{definition}
\newtheorem{definition}[theorem]{Definition}
\newtheorem{problem}[theorem]{Problem}
\theoremstyle{remark}
\newtheorem{remark}[theorem]{Remark}

\newcommand{\dnorm}[1]{\lVert #1\rVert}          
\newcommand{\hgt}{\mathrm{H}}                     
\newcommand{\Z}{\mathbb{Z}}
\newcommand{\Q}{\mathbb{Q}}
\newcommand{\N}{\mathbb{N}}

\newcommand{\Qbar}{\overline{\Q}}
\newcommand{\Grp}{\Gamma}
\newcommand{\eps}{\varepsilon}
\newcommand{\pT}{p_{T}}
\DeclareMathOperator{\round}{round}
\DeclareMathOperator{\Tr}{Tr}

\begin{document}

\title[Superlinear complexity of the $(3/2)^n$ steering word]{Superlinear
complexity of the $(3/2)^n$ steering word}

\author{Ralf Stephan}
\thanks{Institute for Globally Distributed Open Research and Education (IGDORE)
}

\begin{abstract}
Write $(3/2)^n = m_n + \eps_n$ with $m_n$ the nearest integer and
$\eps_n\in[-\tfrac12,\tfrac12)$, and let $T=(t_n)$, $t_n=2m_{n+1}-3m_n$, be the
resulting \emph{steering word}: the step-by-step record of the map
$x\mapsto\tfrac32 x$ on the orbit of 1, coded by nearest-integer rounding.
Using results by Corvaja--Zannier and Nair--Kumar--Rout we prove that the
subword complexity $\pT(k)$ of $T$ is superlinear, $\pT(k)/k\to\infty$. The
argument is completely formalized in Lean~4 and rests on a single external
input, the Evertse--Schlickewei $S$-arithmetic subspace theorem, from which both
cited results are themselves derived within the formalization. 
\end{abstract}
\maketitle

\section{Introduction}\label{sec:intro}

For a real number $x$ we write $\dnorm{x}=\min_{n\in\Z}|x-n|$ for its distance to
the nearest integer. The distribution of the sequence $\big(\dnorm{\alpha^n}\big)$
for a fixed real $\alpha>1$ is poorly understood; already for $\alpha=3/2$ it is
open whether $\dnorm{(3/2)^n}\to 0$ can fail to hold densely. Mahler
\cite{Mah57} proved that for rational $\alpha>1$ that is not an integer, and any
$0<\ell<1$, the fractional part of $\alpha^n$ exceeds $\ell^n$ for all but
finitely many $n$; Corvaja and Zannier \cite{CZ04} recast and extended this
through the Schmidt subspace theorem, and it is their Main Theorem that we shall
use below.

This paper concerns not the sizes $\dnorm{(3/2)^n}$ themselves but the
\emph{symbolic dynamics} they generate. Writing $(3/2)^n=m_n+\eps_n$ with
$m_n\in\Z$ and $\eps_n\in[-\tfrac12,\tfrac12)$, the increments
\[
   t_n \;:=\; 2m_{n+1}-3m_n \;\in\;\{-2,-1,0,1,2\}
\]
record, at each step, which integer translate the multiplication-by-$3/2$ map
selects. The word $T=(t_n)_{n\ge0}$ is the full itinerary of that autonomous
dynamics; we call it the \emph{steering word}. It is the round-to-nearest
analogue of the itineraries of the $\tfrac32$-dynamics studied through rational
base number systems \cite{AFS08}. Its \emph{subword complexity}
$\pT(k)$ is the number of distinct length-$k$ factors it contains. Our main
result is the following.

\begin{theorem}[Main theorem]\label{thm:intro-main}
The subword complexity of the steering word $T$ is superlinear:
for every $C$ there is a $K$ with $\pT(k)>C\,k$ for all $k\ge K$; equivalently
$\pT(k)/k\to\infty$.
\end{theorem}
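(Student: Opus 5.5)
The plan is to show that low complexity at even one large length $k$ forces two orbit points $(3/2)^n,(3/2)^{n'}$ to be extremely close modulo $1$. A subspace-theorem lower bound for $\dnorm{(3/2)^{n'}-(3/2)^n}$ then rules this out.

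\emph{Step 1: a factor pins down $\eps_n$.} From $(3/2)^{n+1}=\tfrac32(3/2)^n$ we get $m_{n+1}+\eps_{n+1}=\tfrac32 m_n+\tfrac32\eps_n$, hence
\[
t_n=3\eps_n-2\eps_{n+1},\qquad \eps_n=\tfrac13\bigl(t_n+2\eps_{n+1}\bigr).
\]
Iterating $k$ times gives
\[
\eps_n=\sum_{j=0}^{k-1}\frac{2^j}{3^{j+1}}\,t_{n+j}+\Bigl(\frac23\Bigr)^k\eps_{n+k}.
\]
So the length-$k$ factor $t_n\cdots t_{n+k-1}$ determines $\eps_n$ up to an error of at most $(2/3)^k$. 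Consequently, if two positions $n<n'$ carry the same length-$k$ factor, then $|\eps_{n'}-\eps_n|\le(2/3)^k$. This gives
\[
\dnorm{(3/2)^{n'}-(3/2)^n}\le(2/3)^k .
\]

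\emph{Step 2: pigeonhole from bounded complexity.} Suppose the theorem fails. Then there is a $C$ with $\pT(k)\le Ck$ for infinitely many $k$. For each such $k$, the $\lfloor Ck\rfloor+1$ factors starting at positions $0,1,\dots,\lfloor Ck\rfloor$ cannot all be distinct. Hence there are $0\le n<n'\le Ck+1$ with the same length-$k$ factor, and by Step 1
\[
0<\dnorm{(3/2)^{n'}-(3/2)^n}\le (2/3)^k .
\]
The quantity is nonzero because $(3/2)^{n'}-(3/2)^n=(3^{n'}-2^{n'-n}3^n)/2^{n'}$ has odd numerator, so it is never an integer.

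\emph{Step 3: the Diophantine lower bound.} This is the main step. I will invoke the Corvaja--Zannier Main Theorem in the two-term form given by Nair--Kumar--Rout, applied to the $S$-units $u=(3/2)^{n'}$ and $v=-(3/2)^n$ with $S=\{2,3,\infty\}$. For every $\delta>0$ it should give that, for all but finitely many pairs,
\[
\dnorm{u+v}\ge \max(\hgt(u),\hgt(v))^{-\delta}=3^{-\delta n'} .
\]
I expect the main obstacle to be checking the hypotheses. One must rule out the exceptional "pseudo-Pisot" or degenerate cases, for instance $u+v$ being an integer or a sub-sum vanishing; these are excluded because the denominators $2^{n'}$ are pure powers of $2$ and the numerator above is odd. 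One must also ensure the height normalization really yields an exponent linear in $n'$.

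\emph{Step 4: contradiction.} Choose $\delta$ with $\delta\,C\log 3<\log(3/2)$. Consider first pairs $(n,n')$ outside the finite exceptional set. Combining Steps 2 and 3 gives
\[
3^{-\delta(Ck+1)}\le(2/3)^k,
\]
which is false for $k$ large. If instead $(n,n')$ lies in the finite exceptional set, then $\dnorm{(3/2)^{n'}-(3/2)^n}$ is bounded below by a fixed positive constant, by the nonvanishing in Step 2. This is again incompatible with the bound $(2/3)^k\to0$. Hence $\pT(k)\le Ck$ can hold for only finitely many $k$, for every $C$, which is exactly $\pT(k)/k\to\infty$.
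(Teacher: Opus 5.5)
Your Steps~1, 2 and~4 are the paper's Stage~0 and Stage~1: the contraction estimate of Lemma~\ref{lem:contraction} plus pigeonhole, i.e.\ the reduction of Theorem~\ref{thm:reduction} to the kernel (K). Handling the finite exceptional set through the positive minimum of $\dnorm{(3/2)^{n'}-(3/2)^n}$ over it, instead of the growth ceiling, is a valid variant.

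The gap is Step~3. What you assert there is (K) itself (with $\theta=3^{-\delta}$), which carries all the depth. The two-term theorem you invoke does not deliver it in the form ``for all but finitely many pairs''. Theorem~1.3(i) of \cite{NKR25} is false as printed, because it omits the hypothesis $\dnorm{\alpha_1u_1+\alpha_2u_2}>0$. Your oddness argument supplies exactly that hypothesis, so this part is fine. But even the repaired statement (Theorem~\ref{thm:NKR}) only applies to an infinite family whose ratios $u_1/u_2$ are pairwise distinct, and it only concludes that some member has integral entries. For your pairs the ratio is $-(3/2)^{n'-n}$, so the hypothesis demands pairwise distinct gaps. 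Nothing in Step~2 prevents infinitely many of the pairs $(n_k,n'_k)$ from sharing a single gap $s_0$. Such a subfamily has constant ratio. In the three-variable subspace argument it lies in the degenerate subspace (a relation not involving the integer coordinate), so the two-term theorem cannot see it.

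The missing idea is the gap dichotomy of Theorem~\ref{thm:dichotomy}.
\begin{itemize}
\item \emph{Bounded gaps.} A fixed gap is a one-parameter problem. The one-variable Corvaja--Zannier theorem with $\delta=(3/2)^{s_0}-1$, $q=1$, $u=(3/2)^n$, $\hgt(u)=3^n$ disposes of it (Theorem~\ref{thm:2b}). So violators whose gaps take only boundedly many values are finitely many (Corollary~\ref{cor:gapbounded}).
\item \emph{Unbounded gaps.} Pick one violator per gap. This infinite gap-injective family meets every hypothesis of Theorem~\ref{thm:NKR} with $(\alpha_1,\alpha_2)=(1,-1)$ and $\epsilon_1=\log\theta^{-1}/(2\log 3)$. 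The theorem then forces $(3/2)^{n'}\in\Z$, which is absurd (Lemma~\ref{lem:NKRbranch}).
\end{itemize}
Neither input suffices alone. The integer slot of Corvaja--Zannier ($q=3^n$, $u=(3/2)^{n'-n}$) reaches only the band $n\le\eps' n'$ (Theorem~\ref{thm:2bprime}), and the two-term theorem misses fixed gaps. With this case split in place your Step~3 holds, and Step~4 then goes through as written.
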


The proof has the shape of a transfer of the Adamczewski--Bugeaud method
\cite{AB07} for complexity lower bounds of algebraic numbers to the present
dynamical word, and
it simplifies in one respect: because the $\tfrac32$-dynamics is
autonomous, the bookkeeping is position-uniform, and every repeated factor of
$T$ compresses into a single clean Diophantine inequality, with no
prefix-anchoring. Concretely the argument runs in three stages.

\emph{Stage~0} (Section~\ref{sec:stage0}) is an exact identity: a length-$k$
factor of $T$ occurring at two positions $a<c$ forces
$3^k(\eps_c-\eps_a)=2^k(\eps_{c+k}-\eps_{a+k})$ and the divisibility
$2^k\mid m_c-m_a$. This already gives two unconditional theorems --- $T$ is not
eventually periodic (Theorem~\ref{thm:aperiodic}), and
$\pT(k)\ge (41/24)\,k-3$ (Theorem~\ref{thm:t01}), strictly above the universal
Morse--Hedlund floor $k+1$.

\emph{Stage~1} (Section~\ref{sec:stage1}) reduces
Theorem~\ref{thm:intro-main} by pigeonhole to a single statement:
\begin{quote}
   (K)\quad for every $\theta<1$, the inequality
   $\dnorm{(3/2)^c-(3/2)^a}\le\theta^{\,c}$ has only finitely many solutions
   $2\le a<c$.
\end{quote}
This is \emph{exponential pair repulsion} for the orbit --- an improvement of the
trivial repulsion floor $\dnorm{(3/2)^c-(3/2)^a}\ge 2^{-c}$ from an exponential
base $2^{-1}$ to an arbitrary base $\theta<1$.

\emph{Stage~2} (Sections~\ref{sec:stage2b} and \ref{sec:stage2c}) establishes
(K). Two one-parameter slices --- bounded gap $c-a$, and the huge-gap band
$a\le\eps' c$ --- fall to the Main Theorem of Corvaja--Zannier. What remains, the
middle band, is closed by a gap dichotomy: either a slice has bounded gaps
(handled by Corvaja--Zannier), or one may extract one violator per gap and apply
a repaired form of the $S$-unit pair theorem of Nair--Kumar--Rout, whose
integrality conclusion is absurd for $(3/2)^c$.

\medskip
The elementary
consequences of Stage~0 and the Stage~1 reduction are unconditional. The two
kernel slices and the middle-band-conditional form of the main theorem
(Theorem~\ref{thm:capstone}) rest on the theorem of Corvaja--Zannier
\cite{CZ04}. The unconditional main theorem (Theorem~\ref{thm:intro-main},
proved as Theorem~\ref{thm:M4}) additionally uses an $S$-unit pair theorem in
the spirit of Nair--Kumar--Rout \cite{NKR25} (Theorem~\ref{thm:NKR}). Neither
input is an axiom of the formalization: both are derived, in the
$\Q$-specialized forms used here, from the Evertse--Schlickewei $S$-arithmetic
subspace theorem \cite{Sch91,BG06} in two and three variables respectively,
which is the single external input of the whole development. The derivation of
the second input uncovered that Theorem~1.3(i) of \cite{NKR25} is \emph{false as
printed}: an explicit infinite family satisfies every hypothesis and violates
the conclusion (Remark~\ref{rem:NKR}). We prove and use a repaired statement,
obtained by restoring to inequality~(1) of \cite{NKR25} the strict-positivity
hypothesis it omits. The authors, on being shown the counterexample, have
confirmed the defect and propose a repair of a different kind, weakening the
conclusion rather than strengthening the hypotheses; it is recorded in
Remark~\ref{rem:NKRfix} and would serve here equally well. An appendix lists
each result together with its formal name and its precise dependency.

\subsection{Relation to the complexity landscape}\label{sec:landscape}

The conclusion $\pT(k)/k\to\infty$ is precisely the shape of the
Adamczewski--Bugeaud theorem \cite{AB07} for the $b$-ary expansions of irrational
algebraic numbers, and the analogy extends to the gap that theorem leaves open:
there Borel's conjecture predicts normality, that is $p(k)=b^{\,k}$, so
superlinearity is the unconditional shadow of an expected exponential truth. The
same is the case here, and it can be made precise.

Superlinearity already places $T$ outside every classical low-complexity family.
Automatic sequences satisfy $p(k)=O(k)$ \cite{Cob72,AS03}, as do linearly
recurrent words \cite{DHS99}, Sturmian words and the codings of rotations and of
interval exchanges, and fixed points of primitive substitutions;
Theorem~\ref{thm:intro-main} excludes $T$ from all of them. For the general theory of the factor complexity function we refer to
\cite{CN10}, and to \cite{Fer99,Puz25} for surveys.

What superlinearity does \emph{not} decide is whether $T$ is morphic, and here a
rigidity theorem of Pansiot \cite{Pan84} enters: the complexity of a pure morphic
word is one of $\Theta(1)$, $\Theta(k)$, $\Theta(k\log\log k)$, $\Theta(k\log k)$
and $\Theta(k^{2})$ --- in particular it is polynomially bounded. Hilion and
Levitt \cite{HL25} have recently proved the analogous four-class theorem for
right-infinite words representing attracting fixed points of an automorphism of a
finitely generated free group. The following two statements turn this rigidity
into a dichotomy for the steering word; both are consequences of
Lemma~\ref{lem:contraction}, read backwards.

\begin{proposition}[Packing bound]\label{prop:packing}
Let $k\in\N$ and let $A\subset\N$ be a finite set of positions whose orbit points
are pairwise more than $(2/3)^k$ apart, that is $|\eps_c-\eps_a|>(2/3)^k$ for all
$a\ne c$ in $A$. Then $\pT(k)\ge |A|$. Equivalently
$\pT(k)\ge N\big((2/3)^k\big)$, where $N(\delta)$ is the largest cardinality of a
$\delta$-separated subset of $\{\eps_n\}$.
\end{proposition}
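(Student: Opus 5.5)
We prove the contrapositive: if two positions $a\ne c$ in $A$ begin the same length-$k$ factor of $T$, then their orbit points are within $(2/3)^k$ of each other. Once this is established, the map $a\mapsto t_a t_{a+1}\cdots t_{a+k-1}$ is injective on $A$. Every value of this map is a length-$k$ factor of $T$, so $\pT(k)\ge|A|$. The equivalent formulation then follows by applying this bound to a maximal $(2/3)^k$-separated subset of $\{\eps_n\}$, choosing for each of its points one position $n$ that realizes it.

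The contraction statement should be the Stage~0 identity read backwards; this is Lemma~\ref{lem:contraction}. The derivation is as follows. Since $(3/2)^{n+1}=\tfrac32(3/2)^n$, we have $m_{n+1}+\eps_{n+1}=\tfrac32(m_n+\eps_n)$. Multiplying by $2$ gives
\[
2\eps_{n+1}=3\eps_n-t_n .
\]
Equivalently,
\[
\eps_n=\tfrac23\eps_{n+1}+\tfrac13 t_n .
\]
So the inverse step is a contraction by $2/3$, and the digit $t_n$ determines which branch of it is taken. Iterating $k$ times gives
\[
\eps_a=(2/3)^k\eps_{a+k}+\sum_{j<k}\tfrac13(2/3)^j t_{a+j}.
\]
If $t_{a+j}=t_{c+j}$ for all $j<k$, subtracting the corresponding formula for $c$ yields
\[
\eps_c-\eps_a=(2/3)^k(\eps_{c+k}-\eps_{a+k}).
\]
This is exactly the Stage~0 relation $3^k(\eps_c-\eps_a)=2^k(\eps_{c+k}-\eps_{a+k})$.

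The only point that needs care is the final estimate. Both $\eps_{c+k}$ and $\eps_{a+k}$ lie in $[-\tfrac12,\tfrac12)$, so $|\eps_{c+k}-\eps_{a+k}|<1$, with strict inequality because the interval is half-open. Hence $|\eps_c-\eps_a|<(2/3)^k$, which contradicts the separation hypothesis. Strictness is not actually needed, since the hypothesis is itself a strict inequality; the non-strict bound $\le(2/3)^k$ already gives the contradiction. The argument is therefore essentially routine once Lemma~\ref{lem:contraction} is invoked.
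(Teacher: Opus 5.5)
Your proposal is correct and follows the paper's proof: the paper likewise invokes Lemma~\ref{lem:contraction} to show that two positions carrying the same length-$k$ factor satisfy $|\eps_c-\eps_a|\le(2/3)^k$, so the factors at the positions of $A$ are pairwise distinct. Your backward iteration $\eps_n=\tfrac23\eps_{n+1}+\tfrac13 t_n$ is the closed form of Lemma~\ref{lem:closed} divided by $3^k$, so your rederivation of the contraction is the paper's own identity.
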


\begin{proof}[Proof]
By Lemma~\ref{lem:contraction}, two positions carrying the same length-$k$ factor
satisfy $|\eps_c-\eps_a|\le(2/3)^k$. Hence the length-$k$ factors at the
positions of $A$ are pairwise distinct.
\end{proof}

\begin{corollary}[Density forces exponential complexity]\label{cor:dense}
If $\big((3/2)^n\big)_{n\ge0}$ is dense modulo one, then
$\pT(k)\ge\lceil(3/2)^k\rceil-1$ for every $k$.
\end{corollary}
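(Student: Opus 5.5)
We deduce Corollary~\ref{cor:dense} from the Packing bound (Proposition~\ref{prop:packing}). It suffices to show that for each $k$ the set $\{\eps_n\}$ contains a $\delta$-separated subset with at least $\lceil(3/2)^k\rceil-1$ elements, where $\delta=(2/3)^k$. Then $N(\delta)\ge\lceil(3/2)^k\rceil-1$, and the proposition gives the bound on $\pT(k)$.

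First we move from density modulo one to density of the errors. The sequence $(3/2)^n$ is dense modulo one exactly when its fractional parts are dense in $[0,1)$. Since $\eps_n$ is the fractional part shifted into $[-\tfrac12,\tfrac12)$, the set $\{\eps_n:n\ge0\}$ is then dense in $[-\tfrac12,\tfrac12)$. The shift is the map sending $u\in[0,\tfrac12)$ to $u$ and $u\in[\tfrac12,1)$ to $u-1$, which is a homeomorphism of the circle, so density is preserved. Every nonempty open subinterval of $[-\tfrac12,\tfrac12)$ therefore contains some $\eps_n$.

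Next we build the separated set. Put $M=\lceil(3/2)^k\rceil-1$, so $M<(3/2)^k$ and hence $M\delta<1$. Place $M$ disjoint open target intervals in $(-\tfrac12,\tfrac12)$ whose mutual gaps exceed $\delta$. Concretely, take $M$ equally spaced centres $x_j=-\tfrac12+(j-\tfrac12)/M$ for $j=1,\dots,M$, so consecutive centres are $1/M>\delta$ apart. Around each centre take a radius $r$ with $2r<1/M-\delta$. By density, choose for each $j$ a position $n_j$ with $|\eps_{n_j}-x_j|<r$. For $i<j$ we then get $|\eps_{n_j}-\eps_{n_i}|\ge (j-i)/M-2r>\delta$. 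The values differ, so the positions are distinct, and $A=\{n_1,\dots,n_M\}$ has $M$ elements that are pairwise more than $\delta$ apart. The case $M=0$, which occurs when $k=0$, is trivial.

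Finally, Proposition~\ref{prop:packing} applied to $A$ gives $\pT(k)\ge M=\lceil(3/2)^k\rceil-1$.

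The only delicate point is the strict inequality. The Packing bound requires separation strictly greater than $(2/3)^k$. This forces the count to be $\lceil(3/2)^k\rceil-1$, the largest integer $M$ with $M(2/3)^k<1$, rather than $\lfloor(3/2)^k\rfloor+1$. It also requires the slack $1/M-\delta>0$ so that $r$ exists, and this follows from $M<(3/2)^k$. Everything else is routine.
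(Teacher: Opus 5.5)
Your proposal is correct and is essentially the paper's proof. Both take $M<(3/2)^k$ points on a $1/M$-spaced grid in $[-\tfrac12,\tfrac12)$ and approximate each by an orbit value, using a radius small enough that the pairwise separation stays above $(2/3)^k$. Both then invoke the Packing bound (Proposition~\ref{prop:packing}). The remaining differences are cosmetic: you use midpoint centres and a radius $r$ with $2r<1/M-(2/3)^k$, where the paper uses left-endpoint centres and the radius $\tfrac13\bigl(1/M-(2/3)^k\bigr)$.
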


\begin{proof}[Proof]
Fix $k$ and a positive integer $M<(3/2)^k$, so that $(2/3)^k<1/M$. The $M$ points
$-\tfrac12+j/M$, $0\le j<M$, lie in $[-\tfrac12,\tfrac12)$ and are $1/M$-separated.
Approximating each of them within $\delta=\tfrac13\big(1/M-(2/3)^k\big)>0$ by a
point of the orbit produces $M$ positions whose $\eps$-values are pairwise more
than $1/M-2\delta=\tfrac13\cdot\tfrac1M+\tfrac23(2/3)^k>(2/3)^k$ apart, so
$\pT(k)\ge M$ by Proposition~\ref{prop:packing}. Since
$\lceil(3/2)^k\rceil-1<(3/2)^k$, the claim follows.
\end{proof}

Two readings of Corollary~\ref{cor:dense} place the main theorem. First, it
identifies Theorem~\ref{thm:intro-main} as the weakest nontrivial instance of what
the density conjecture predicts: the expected truth is exponential, and the
difficulty of the subject lies in the gap between $k\cdot\omega(1)$ and
$(3/2)^k$. Second, read contrapositively, \emph{any} subexponential upper bound
$\pT(k)=o\big((3/2)^k\big)$ would refute density of $\big((3/2)^n\big)$ modulo
one. This is why no upper bound of that strength is offered here: beyond the
trivial alphabet ceiling $\pT(k)\le5^k$, the sharp known environment is Kopra's
trace subshift \cite{Kop21}, which gives $\pT(k)\le 4\cdot3^k-3\cdot2^k$ --- of
base $3$ rather than $3/2$.

Combining Corollary~\ref{cor:dense} with Pansiot's theorem: if
$\big((3/2)^n\big)$ is dense modulo one, then $T$ is not morphic, and by
\cite{HL25} it does not represent an attracting fixed point of a free group
automorphism either; equivalently, exhibiting a morphism that generates $T$ would
refute the density conjecture. It is Theorem~\ref{thm:intro-main} that makes the
dichotomy non-vacuous, by eliminating the classes $\Theta(1)$ and $\Theta(k)$
outright.

The implication runs one way only. Proposition~\ref{prop:packing} bounds the
complexity below by a packing number, not conversely: the letter $t_n$ is a
function of $(\eps_n,m_n\bmod 2)$, but $m_{n+1}\bmod 2$ already requires
$m_n\bmod 4$, so a length-$k$ window is governed by $(\eps_a,m_a\bmod 2^k)$ and
distinct factors need not arise from separated orbit points. We claim no converse
to Lemma~\ref{lem:contraction}, and Theorem~\ref{thm:intro-main} accordingly
transfers nothing back to the density question. The complementary unconditional
information is due to Flatto, Lagarias and Pollington \cite{FLP95}, whose theorem
gives the set of limit points of $\{\xi(3/2)^n\}$ a diameter of at least $1/3$:
the orbit closure is known to be spread out, but is not known to have positive box
dimension --- which by Proposition~\ref{prop:packing} would already force
$\pT(k)\ge c^{\,k}$ for some $c>1$.

Finally, on the shape of $\pT$ as a function of $k$. The region bracketed by the
bounds above is genuinely populated: Cassaigne \cite{Cas03} constructs words whose
complexity grows faster than every polynomial and slower than every exponential;
and in the linear regime the pairs $(\alpha,\beta)$ realizable as
$\alpha=\liminf_k \pT(k)/k$ and $\beta=\limsup_k \pT(k)/k$ constitute the Heinis
spectrum, recently shown by Erazo and Moreira \cite{EM25} to have non-empty
interior.

\subsection*{Setup and notation}
Throughout, $\dnorm{\cdot}$ is the distance to the nearest integer, computed
exactly in $\Q$ for all the rationals that occur. For $u\in\Q^\times$ written in
lowest terms $u=p/q$ we write $\hgt(u)=\max(|p|,|q|)$ for its absolute Weil
height; in particular $\hgt\big((3/2)^a\big)=3^a$. We abbreviate
$\Grp=\langle 2,3\rangle\le\Q^\times$, the multiplicative group generated by $2$
and $3$.

\begin{definition}[The orbit decomposition and the steering word]\label{def:objects}
For $n\in\N$ put
\[
   m_n:=\round\big((3/2)^n\big),\qquad
   \eps_n:=(3/2)^n-m_n\in[-\tfrac12,\tfrac12),\qquad
   R_n:=3^n-2^n m_n,
\]
so that $\eps_n=R_n/2^n$; here $\round$ rounds half-integers up, placing
$\eps_n$ in the half-open window $[-\tfrac12,\tfrac12)$. The \emph{steering
letter} is $t_n:=2m_{n+1}-3m_n$, the \emph{steering word} is $T=(t_n)_{n\ge0}$,
and the \emph{parity word} is $b=(b_n)$, $b_n:=m_n\bmod 2$.
\end{definition}

Elementary manipulations show
$t_n=3\eps_n-2\eps_{n+1}$, whence $|t_n|\le 2$ (a five-letter alphabet) and
$t_n\equiv m_n\pmod 2$, so that $b$ is the letter-to-letter reduction of $T$
modulo $2$. Moreover $R_n$ is odd for $n\ge1$; this parity is the source of the
trivial repulsion floor. The sequence begins
$m=1,2,2,3,5,8,\dots$ and $t=1,-2,0,1,1,\dots$. All objects live in $\Z$ or $\Q$;
no real analysis enters before Section~\ref{sec:stage2b}.

\begin{definition}[Circuit sum]\label{def:circuit}
For $a,k\in\N$ put
\[
   W(a,k)\;:=\;\sum_{i<k} 3^{\,k-1-i}\,2^{\,i}\,t_{a+i}\ \in\ \Z .
\]
\end{definition}

\section{The repetition identity}\label{sec:stage0}

The whole analysis rests on one exact cancellation. Iterating the step relation
$2m_{n+1}=3m_n+t_n$ over a window telescopes the circuit sum into a closed form.

\begin{lemma}[Closed form of the circuit sum]\label{lem:closed}
For all $a,k\in\N$,
\[
   W(a,k)\;=\;3^k\eps_a-2^k\eps_{a+k},
   \qquad\text{equivalently}\qquad
   2^k m_{a+k}=3^k m_a+W(a,k).
\]
\end{lemma}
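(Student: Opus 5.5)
The plan is to prove the integer form $2^k m_{a+k}=3^k m_a+W(a,k)$ by induction on $k$ with $a$ fixed, using only the step relation $2m_{n+1}=3m_n+t_n$. That relation is just a rearrangement of the definition $t_n:=2m_{n+1}-3m_n$. The $\eps$-form then follows by substitution.

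For the base case $k=0$, the sum $W(a,0)$ is empty, so both sides equal $m_a$. For the inductive step, assume $2^k m_{a+k}=3^k m_a+W(a,k)$. Multiply the step relation at $n=a+k$ by $2^k$:
\[
2^{k+1}m_{a+k+1}=3\cdot 2^k m_{a+k}+2^k t_{a+k}=3^{k+1}m_a+3W(a,k)+2^k t_{a+k}.
\]
It remains to check the recursion $W(a,k+1)=3W(a,k)+2^k t_{a+k}$. This is immediate from Definition~\ref{def:circuit}: for $i<k$, each term $3^{k-1-i}2^i t_{a+i}$ of $W(a,k)$ becomes $3^{k-i}2^i t_{a+i}$ after multiplication by $3$, which is the $i$-th term of $W(a,k+1)$. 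The final term of $W(a,k+1)$, at $i=k$, is $3^0 2^k t_{a+k}$. This index bookkeeping is the only point requiring care. I do not expect a real obstacle, though in a formalization one would state the recursion as a separate lemma about splitting off the last summand.

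For the equivalence with the $\eps$-form, write $m_n=(3/2)^n-\eps_n$ (Definition~\ref{def:objects}) and substitute into $2^k m_{a+k}-3^k m_a$. This gives
\[
2^k\Big(\tfrac{3^{a+k}}{2^{a+k}}-\eps_{a+k}\Big)-3^k\Big(\tfrac{3^a}{2^a}-\eps_a\Big)=3^k\eps_a-2^k\eps_{a+k},
\]
because the terms $3^{a+k}/2^a$ cancel. Hence the two displayed forms are equivalent, and the lemma follows. Alternatively, one could run the same induction directly on the $\eps$-form using $t_n=3\eps_n-2\eps_{n+1}$, where the sum telescopes. The integer version is preferable because it stays inside $\Z$.
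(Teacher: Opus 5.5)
Your proof is correct and essentially the paper's: both induct on $k$ via the recurrence $W(a,k+1)=3W(a,k)+2^k t_{a+k}$. The paper runs the induction on the $\eps$-form using $t_n=3\eps_n-2\eps_{n+1}$, whereas you run it on the integer form straight from the definition of $t_n$ and transfer via the cancellation of $3^{a+k}/2^a$; this is an equally valid and equally short variant.
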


\begin{proof}[Proof]
Induction on $k$, using the recurrence $W(a,k+1)=3\,W(a,k)+2^k t_{a+k}$ and
$t_n=3\eps_n-2\eps_{n+1}$; the main terms of the orbit cancel exactly.
\end{proof}

\begin{definition}[Repetition]\label{def:rep}
A \emph{repetition} of length $k$ at positions $a<c$ is an equality of factors,
$t_{a+i}=t_{c+i}$ for all $i<k$. (Occurrences may overlap; nothing anchors them to
a prefix.)
\end{definition}

Equal factors accumulate equal circuit sums, so subtracting two instances of the
closed form gives the central identity.

\begin{theorem}[The repetition identity]\label{thm:lemmaR}
If there is a repetition of length $k$ at $a<c$, then
\[
   3^k(\eps_c-\eps_a)=2^k(\eps_{c+k}-\eps_{a+k}),
   \qquad
   3^k(m_c-m_a)=2^k(m_{c+k}-m_{a+k}).
\]
\end{theorem}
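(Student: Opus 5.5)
The plan is to compare the two instances of Lemma~\ref{lem:closed}. The first step is to observe that a repetition of length $k$ at $a<c$ forces the circuit sums to agree. Indeed, $W(a,k)$ and $W(c,k)$ are the same weighted sum $\sum_{i<k}3^{k-1-i}2^{i}(\cdot)$ applied to the letter sequences $(t_{a+i})_{i<k}$ and $(t_{c+i})_{i<k}$. These sequences coincide by Definition~\ref{def:rep}, so $W(a,k)=W(c,k)$ termwise. This step needs no ordering hypothesis, and overlap of the two occurrences is irrelevant.

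Next I apply Lemma~\ref{lem:closed} at $a$ and at $c$. In its $\eps$-form it gives
\[
3^k\eps_a-2^k\eps_{a+k}=W(a,k)=W(c,k)=3^k\eps_c-2^k\eps_{c+k}.
\]
Rearranging yields $3^k(\eps_c-\eps_a)=2^k(\eps_{c+k}-\eps_{a+k})$, which is the first identity.

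For the second identity I use the integer form of the lemma, $2^k m_{a+k}=3^k m_a+W(a,k)$, together with the analogous equation at $c$. Subtracting the two equations, the equal $W$ terms cancel, leaving $2^k(m_{c+k}-m_{a+k})=3^k(m_c-m_a)$.

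As a consistency check, one identity also follows from the other. Since $m_n=(3/2)^n-\eps_n$, the main terms $3^k(3/2)^c$ and $2^k(3/2)^{c+k}$ agree, because $2^k(3/2)^{c+k}=3^k(3/2)^{c}$. So I will derive both identities directly from the lemma rather than from each other.

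I expect no genuine obstacle: all the content sits in Lemma~\ref{lem:closed}, which is assumed. The only point needing care is that $W$ depends on the position only through the letters read, so equality of factors transfers verbatim to equality of circuit sums. In a formal setting this is a congruence argument on a finite sum indexed by $i<k$.
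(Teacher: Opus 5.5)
Your proposal is correct and follows the paper's proof: equal factors give $W(a,k)=W(c,k)$, and substituting the $\eps$-form and the integer form of Lemma~\ref{lem:closed} at $a$ and $c$ and cancelling yields the two identities. Your side remark that the two identities are equivalent via $m_n=(3/2)^n-\eps_n$ is also correct, but it is not needed.
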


\begin{proof}[Proof]
By Definition~\ref{def:rep} the circuit sums agree, $W(a,k)=W(c,k)$. Substituting
the two forms of Lemma~\ref{lem:closed} and cancelling yields both identities
(the fractional form from the $\eps$-closed form, the integer form from its
integer companion).
\end{proof}

\begin{corollary}[Divisibilities]\label{cor:dvd}
Under the hypothesis of Theorem~\ref{thm:lemmaR},
\[
2^k\mid m_c-m_a\qquad \text{and} \qquad3^k\mid m_{c+k}-m_{a+k}.
\]
\end{corollary}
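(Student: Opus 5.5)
The plan is to read both divisibilities directly off the integer form of Theorem~\ref{thm:lemmaR},
\[
   3^k(m_c-m_a)=2^k(m_{c+k}-m_{a+k}),
\]
using only that $2^k$ and $3^k$ are coprime. No analytic input is needed; the argument is pure arithmetic in $\Z$.

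For the first divisibility, observe that $2^k$ divides the right-hand side, so $2^k\mid 3^k(m_c-m_a)$. Since $\gcd(2^k,3^k)=1$, Euclid's lemma (or Gauss's lemma in $\Z$) gives $2^k\mid m_c-m_a$.

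Symmetrically, $3^k$ divides the left-hand side, hence $3^k\mid 2^k(m_{c+k}-m_{a+k})$. Coprimality then yields $3^k\mid m_{c+k}-m_{a+k}$.

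The only point requiring care is that the integer identity, not merely the $\eps$-identity, is available. The $\eps$-form lives in $\Q$ and carries no divisibility information. The integer companion comes from subtracting the two instances $2^k m_{a+k}=3^k m_a+W(a,k)$ and $2^k m_{c+k}=3^k m_c+W(c,k)$ of Lemma~\ref{lem:closed}, using $W(a,k)=W(c,k)$. Since Theorem~\ref{thm:lemmaR} already states this integer form, there is no real obstacle: the corollary is a two-line coprimality argument.
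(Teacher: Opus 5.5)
Your proposal is correct and is the paper's own proof: read $2^k\mid 3^k(m_c-m_a)$ and $3^k\mid 2^k(m_{c+k}-m_{a+k})$ off the integer identity of Theorem~\ref{thm:lemmaR}, then cancel using $\gcd(2^k,3^k)=1$. Your remark that this integer identity comes from Lemma~\ref{lem:closed} together with $W(a,k)=W(c,k)$ also matches the paper.
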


\begin{proof}[Proof]
$2^k$ divides $3^k(m_c-m_a)$ by the integer identity, and $\gcd(2^k,3^k)=1$;
symmetrically for $3^k$.
\end{proof}

The divisibility upgrades to size once $m$ is strictly increasing, which it is
from index $2$ on. This is the mechanism that forbids long repetitions late in
the word.

\begin{proposition}[Growth ceiling]\label{prop:ceiling}
If there is a repetition of length $k$ at $2\le a<c$, then
\[
   2^{\,k+c+1}\le 3^{\,c+1},
\]
that is, $k\le (c+1)\log_2(3/2)\approx 0.585\,(c+1)$.
\end{proposition}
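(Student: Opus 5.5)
Corollary~\ref{cor:dvd} gives $2^k\mid m_c-m_a$, and the plan is to set this against a size bound. The first step is to check that $m_c-m_a\neq0$ and is positive. Since $m_n$ is strictly increasing from index $2$ on, $a\ge2$ and $a<c$ give $m_c>m_a$. So $m_c-m_a$ is a positive multiple of $2^k$, and hence
\[
   2^k\le m_c-m_a .
\]

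The second step bounds $m_c-m_a$ from above using only the rounding. We have $m_c=(3/2)^c-\eps_c$ with $\eps_c\ge-\tfrac12$, so $m_c\le(3/2)^c+\tfrac12$. Since $m_a\ge m_2=2$, this gives $m_c-m_a\le(3/2)^c-\tfrac32$, and in particular $m_c-m_a<(3/2)^c$. Therefore $2^k<(3/2)^c$, that is, $2^{k+c}<3^c$.

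The target inequality is $2^{k+c+1}\le3^{c+1}$, i.e.\ $2^k\le\tfrac32(3/2)^c$. This is weaker than what the second step yields, so it follows immediately: $2\cdot2^{k+c}<2\cdot3^c\le3^{c+1}$. Taking $\log_2$ then gives $k\le(c+1)\log_2 3-(c+1)=(c+1)\log_2(3/2)$. The slack in the statement is probably there to make the integer-arithmetic formalization convenient. If one wants to stay in $\Z$, the same computation can be written as $2^c m_c=3^c-R_c$ with $|R_c|\le2^{c-1}$, and similarly for $m_a$.

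The only step needing real care is the strict monotonicity of $m_n$ for $n\ge2$, which the paper asserts just before the statement. If it had to be proved, I would write
\[
   m_{n+1}-m_n=\tfrac12(3/2)^n-\eps_{n+1}+\eps_n ,
\]
which is greater than $\tfrac12(3/2)^n-1$. This is at least $\tfrac18>0$ for $n\ge2$, and since $m_{n+1}-m_n$ is an integer it is at least $1$. With that in hand, the divisibility-versus-size comparison is routine.
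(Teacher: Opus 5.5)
Your proof is correct and is essentially the paper's argument. It sets the divisibility $2^k\mid m_c-m_a$ from Corollary~\ref{cor:dvd} against a size bound, using strict monotonicity of $m$ from index $2$ on for positivity and the rounding bound $\eps_c\ge-\tfrac12$ for the upper estimate on $m_c$.

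The only difference is bookkeeping. The paper discards $m_a$, using $2^k\le m_c$ and then $2^{c+1}m_c\le 2\cdot3^c+2^c\le3^{c+1}$, so the factor $3^{c+1}$ in place of $2\cdot3^c$ is what absorbs the rounding term $2^c$. That, rather than formalization convenience, is the source of the slack you noticed. You instead keep $m_a\ge2$ to absorb the rounding term, which gives the slightly sharper $2^{k+c}<3^c$.
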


\begin{proof}[Proof]
Corollary~\ref{cor:dvd} gives $2^k\mid m_c-m_a$, and $m_a<m_c$ (strict
monotonicity of $m$ for indices $\ge2$), so $2^k\le m_c-m_a\le m_c$. Combined
with the a~priori bound $2\,(2^c m_c)\le 2\cdot 3^c+2^c$ --- itself a restatement
of $\eps_c\ge-\tfrac12$ --- this yields $2^{k+c+1}\le 3^{c+1}$ as an inequality
of integers.
\end{proof}

\begin{remark}
The same computation bounds a $(j{+}1)$-fold repetition of a length-$p$ block
starting at $a\ge2$ by $2^{\,jp+(a+p)+1}\le 3^{\,a+p+1}$: long periodic windows
cannot occur late in the word.
\end{remark}

The identity has two further quantitative shadows, the interface to the
Diophantine kernel of Section~\ref{sec:stage1}.

\begin{lemma}[Contraction]\label{lem:contraction}
A repetition of length $k$ at $a<c$ forces $|\eps_c-\eps_a|\le (2/3)^k$.
\end{lemma}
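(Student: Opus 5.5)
The plan is to read the fractional form of the repetition identity (Theorem~\ref{thm:lemmaR}) backwards and use the a~priori size of the $\eps$'s. A repetition of length $k$ at $a<c$ gives
\[
   3^k(\eps_c-\eps_a)=2^k(\eps_{c+k}-\eps_{a+k}),
\]
so
\[
   |\eps_c-\eps_a|=(2/3)^k\,|\eps_{c+k}-\eps_{a+k}|.
\]
Everything therefore reduces to bounding the difference of two $\eps$-values by $1$.

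By Definition~\ref{def:objects}, each $\eps_n$ lies in the half-open window $[-\tfrac12,\tfrac12)$. For any two indices $n,n'$ this gives
\[
   -1<\eps_n-\eps_{n'}<1,
\]
hence $|\eps_{c+k}-\eps_{a+k}|<1$. Multiplying by $(2/3)^k$ yields $|\eps_c-\eps_a|<(2/3)^k$. This is even strict, so in particular $|\eps_c-\eps_a|\le(2/3)^k$, as claimed.

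There is no real obstacle. The only point to watch is that the identity is applied to the correct pair, namely the positions shifted by $k$ on the right-hand side, which works for every $a<c$ with no restriction such as $a\ge2$. The rest is routine exact manipulation in $\Q$, which is also how I would phrase it for the formalization: clear denominators as $3^k|\eps_c-\eps_a|=2^k|\eps_{c+k}-\eps_{a+k}|\le 2^k$.
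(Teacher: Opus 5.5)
Your proof is correct and is the paper's own argument: you rewrite the fractional identity of Theorem~\ref{thm:lemmaR} as $\eps_c-\eps_a=(2/3)^k(\eps_{c+k}-\eps_{a+k})$ and bound the difference of two values in $[-\tfrac12,\tfrac12)$ by $1$. Your remark that the inequality is in fact strict, and holds for all $a<c$ without requiring $a\ge2$, is also right, though the lemma does not need it.
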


\begin{proof}[Proof]
Rewrite the fractional identity of Theorem~\ref{thm:lemmaR} as
\[
\eps_c-\eps_a=(2/3)^k(\eps_{c+k}-\eps_{a+k})
\]
and bound the difference of two
centered fractional parts by $1$.
\end{proof}

\begin{lemma}[Trivial repulsion floor]\label{lem:floor}
For $1\le a<c$,
\[
2^c\,\dnorm{(3/2)^c-(3/2)^a}\ge 1.
\]
\end{lemma}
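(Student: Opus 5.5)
The plan is to clear denominators and reduce the statement to an odd-integer parity observation. Write
\[
(3/2)^c-(3/2)^a=\frac{3^c-2^{c-a}3^a}{2^c}=\frac{N}{2^c},\qquad N:=3^c-2^{c-a}3^a\in\Z .
\]
Since $c>a$, the factor $2^{c-a}$ is even, so $N\equiv 3^c\equiv 1\pmod 2$; thus $N$ is odd.

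Next we express the distance to the nearest integer through $N$. For any $n\in\Z$ we have
\[
2^c\Big|\frac{N}{2^c}-n\Big|=|N-2^c n| .
\]
Because $c\ge a+1\ge 2$, the integer $2^c n$ is even while $N$ is odd. Hence $N-2^c n$ is an odd integer, in particular nonzero, and so $|N-2^c n|\ge1$. Taking the minimum over $n$ gives $2^c\dnorm{(3/2)^c-(3/2)^a}\ge1$.

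No step here is a real obstacle. The only point that needs care is the parity of $N$, which is exactly where $a<c$ is used: if $a=c$, then $N$ would be $0$. The hypothesis $a\ge1$ is not actually needed for this argument, beyond guaranteeing $c\ge 1$ so that $2^c$ is even.

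Alternatively, the bound could be phrased through the odd residues $R_n$ of Definition~\ref{def:objects}. One has
\[
(3/2)^c-(3/2)^a\equiv \eps_c-\eps_a=\frac{R_c-2^{c-a}R_a}{2^c}\pmod 1,
\]
with numerator odd because $R_c$ is odd. This is the parity remark made after Definition~\ref{def:objects}, but the direct computation above is simpler.
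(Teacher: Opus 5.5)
Your proof is correct and is essentially the paper's own argument: it writes the difference as $3^a(3^{c-a}-2^{c-a})/2^c$, observes that the numerator is odd because $c-a\ge1$, and concludes that the distance to the nearest integer is at least $2^{-c}$. The paper also records your alternative formulation, that $2^c(\eps_c-\eps_a)$ is an odd integer.
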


\begin{proof}[Proof]
The orbit difference is
\[
3^a(3^{\,c-a}-2^{\,c-a})/2^c,
\]
a fraction with odd
numerator over $2^c$; hence its distance to the nearest integer is at least
$2^{-c}$. (Equivalently $2^c(\eps_c-\eps_a)$ is an odd integer.)
\end{proof}

Lemma~\ref{lem:floor} is the estimate the kernel (K) will improve: from the
exponential base $2^{-1}$ to an arbitrary base $\theta<1$. Two unconditional
theorems already drop out of Stage~0.

\begin{theorem}[Aperiodicity]\label{thm:aperiodic}
The steering word $T$ is not eventually periodic.
\end{theorem}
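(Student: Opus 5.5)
Suppose for contradiction that $T$ is eventually periodic: there are $p\ge1$ and $n_0$ with $t_{n+p}=t_n$ for all $n\ge n_0$. The plan is to use the Growth ceiling (Proposition~\ref{prop:ceiling}), which forbids arbitrarily long repetitions at positions that stay fixed. Fix $a:=\max(n_0,2)$ and $c:=a+p$. Eventual periodicity gives $t_{a+i}=t_{c+i}$ for \emph{every} $i\ge0$. So for each $k\in\N$ there is a repetition of length $k$ at $2\le a<c$ in the sense of Definition~\ref{def:rep}. The overlap of the two occurrences is permitted there, which is exactly why no care is needed about $k$ exceeding $p$.

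Proposition~\ref{prop:ceiling} then gives $2^{k+c+1}\le 3^{c+1}$ for all $k$. The right side is a fixed integer, since $c$ does not depend on $k$, while the left side is unbounded in $k$. Taking $k=c+1$, say, already gives $2^{2c+2}=4^{c+1}>3^{c+1}$, a contradiction.

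If one prefers not to invoke the ceiling, the same contradiction comes straight from Corollary~\ref{cor:dvd}. For all $k$ we get $2^k\mid m_c-m_a$. This difference is a fixed integer, and it is nonzero because $m$ is strictly increasing from index $2$, so it cannot be divisible by arbitrarily large powers of $2$.

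I do not expect a real obstacle; the argument is short. The one point to verify is that the hypotheses of Proposition~\ref{prop:ceiling} are met: $a\ge2$, which is why $a$ is shifted to at least $2$, and $a<c$, which holds since $p\ge1$. Behind this sits the strict monotonicity of $(m_n)_{n\ge2}$. It follows because $(3/2)^{n+1}-(3/2)^n=\tfrac12(3/2)^n\ge\tfrac98>1$ for $n\ge2$, which exceeds the maximal rounding discrepancy of $1$ between $m_{n+1}-m_n$ and $(3/2)^{n+1}-(3/2)^n$.
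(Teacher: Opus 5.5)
Your proof is correct and is essentially the paper's argument: fix $a=\max(N,2)$ and $c=a+p$, note that there are repetitions of every length at $(a,c)$, and contradict the growth ceiling of Proposition~\ref{prop:ceiling}. The only differences are that the paper takes $k=3^{c+1}$ where you take $k=c+1$ (equally valid, since $4^{c+1}>3^{c+1}$), and that your alternative via Corollary~\ref{cor:dvd} simply isolates the core of the ceiling's proof, which is also fine.
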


\begin{proof}[Proof]
If $t_{n+p}=t_n$ for all $n\ge N$ with $p\ge1$, then at $a=\max(N,2)$ and
$c=a+p$ there is a repetition of \emph{every} length $k$. Taking $k=3^{\,c+1}$
contradicts the growth ceiling $2^{\,k+c+1}\le 3^{\,c+1}$ of
Proposition~\ref{prop:ceiling}, since $3^{\,c+1}<2^{\,3^{c+1}}$. (Aperiodicity of
this family is also a special case of \cite[Lemma~1]{DN05}.)
\end{proof}

\begin{theorem}[Complexity floor]\label{thm:t01}
For all $k$,
\[
   41\,k\;\le\;24\,\pT(k)+72,
   \qquad\text{i.e.}\qquad
   \pT(k)\ \ge\ \tfrac{41}{24}\,k-3\ \approx\ 1.708\,k.
\]
\end{theorem}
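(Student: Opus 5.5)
The plan is to convert the growth ceiling of Proposition~\ref{prop:ceiling} into a count of pairwise distinct factors. The idea is that a length-$k$ factor starting at a position $c\ge 2$ cannot have appeared earlier at any position $a$ with $2\le a<c$, unless $c$ is already large compared with $k$. Hence every position in an initial stretch of length about $k/\log_2(3/2)\approx 1.7095\,k$ contributes a new factor. The constant $41/24\approx 1.7083$ is a rational just below $1/\log_2(3/2)$, chosen so that everything can be checked by integer arithmetic.

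\emph{Step 1 (numerical input).} We verify $3^{41}<2^{65}$, that is $(3/2)^{41}<2^{24}$, by direct computation. Raising this to the $k$-th power gives $3^{41k}<2^{65k}$ for all $k\ge1$; the case $k=0$ of the theorem is trivial.

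\emph{Step 2 (no early repetitions).} Fix $k\ge 1$ and positions $2\le a<c$ with $24(c+1)\le 41k$. We claim there is no repetition of length $k$ at $a<c$. If there were, Proposition~\ref{prop:ceiling} would give $2^{k+c+1}\le 3^{c+1}$. Raising to the $24$th power gives $2^{24k}\,2^{24(c+1)}\le 3^{24(c+1)}$, i.e.\ $2^{24k}\le (3/2)^{24(c+1)}$. On the other hand,
\[
(3/2)^{24(c+1)}\le (3/2)^{41k}<2^{24k}
\]
by Step~1, a contradiction. Everything is carried out on integers after clearing denominators: multiply through by $2^{24(c+1)}$ and use $24(c+1)\le 41k$.

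\emph{Step 3 (counting).} Let $C$ be the largest integer with $24(C+1)\le 41k$, so that $C\ge 41k/24-2$. By Step~2, the length-$k$ factors starting at the positions $2,3,\dots,C$ are pairwise distinct. This yields $\pT(k)\ge C-1\ge 41k/24-3$, which is $41k\le 24\,\pT(k)+72$. When $C<2$ the bound is vacuous, since then $41k/24-3<0\le \pT(k)$.

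The only delicate point is Step~1 together with the bookkeeping of exponents, so that the comparison of $(3/2)^{c+1}$ with $2^k$ stays in exact integer arithmetic. This is also why $41/24$ rather than the sharp constant $1/\log_2(3/2)$ appears; the rest is immediate from Proposition~\ref{prop:ceiling}.
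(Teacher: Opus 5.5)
Your proposal is correct and follows essentially the same route as the paper: you combine the growth ceiling of Proposition~\ref{prop:ceiling} with the integer certificate $3^{41}\lessgtr 2^{65}$, conclude that the length-$k$ windows starting at $2,\dots,C$ are pairwise distinct, and count $C-1$ factors. The only difference is that you use the strict inequality $3^{41}<2^{65}$, which is true, where the paper uses $3^{41}\le 2^{65}$; this moves the cutoff by one, to $C$ maximal with $24(C+1)\le 41k$ instead of the paper's $C=\lfloor(41k-25)/24\rfloor$, and changes nothing in the counting.
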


\begin{proof}[Proof]
By Proposition~\ref{prop:ceiling} and the certified estimate $3^{41}\le 2^{65}$
(which encodes $\log_2 3\le 65/41$), a repetition of length $k$ at $2\le a<c$
forces $41k\le 24c+24$. Hence the windows starting at
$2,3,\dots,C$ with $C=\lfloor(41k-25)/24\rfloor$ are pairwise distinct as
factors, giving at least $C-1$ distinct length-$k$ factors; comparing with the
total count $\pT(k)$ and simplifying yields the stated inequality.
\end{proof}

\begin{remark}
The constant intrinsic to the method is $1/\log_2(3/2)\approx 1.7095$; the
rational slope $41/24\approx1.7083$ formalized here is the certified rounding of
it obtained from the integer certificate $3^{41}\le 2^{65}$. In either form the
bound lies strictly above the Morse--Hedlund floor $k+1$ valid for every
aperiodic word \cite{MH38}. A lower bound of exactly this shape, with the
constant $1/\log_2(3/2)$, is due to Dubickas \cite{Dub09} for the digit words of
$\lceil(p/q)^n\rceil$; the present statement is its transfer to the
nearest-integer steering word of the orbit of $1$.
\end{remark}

\subsection*{The parity word}
The primary target is the five-letter word $T$, not its binary reduction $b$.
Since $b$ is a letter-to-letter image of $T$ we have $\pT(k)\ge p_b(k)$, so
Theorem~\ref{thm:t01} says nothing about $p_b$. What survives is a dichotomy.

\begin{proposition}[Parity dichotomy]\label{prop:parity}
A length-$k$ repetition of the parity word $b$ at positions $a,c$ either lifts to
a repetition of $T$ (so Theorem~\ref{thm:t01} applies), or the two windows first
disagree at some step $j<k$ at which the steering letters differ by a nonzero even
amount.
\end{proposition}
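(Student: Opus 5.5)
The statement is essentially a case split on whether the two $T$-windows agree, so I will argue directly from the letter-to-letter relation $t_n\equiv m_n\equiv b_n\pmod 2$ recorded after Definition~\ref{def:objects}. Suppose $b_{a+i}=b_{c+i}$ for all $i<k$. Either $t_{a+i}=t_{c+i}$ for every $i<k$, or not. In the first case there is, by Definition~\ref{def:rep}, a repetition of $T$ of length $k$ at $a,c$. Then Theorem~\ref{thm:lemmaR}, Corollary~\ref{cor:dvd}, Proposition~\ref{prop:ceiling} and the counting behind Theorem~\ref{thm:t01} apply verbatim (for $2\le a<c$, as there). This is the first alternative.

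In the second case, let $j<k$ be the least index with $t_{a+j}\ne t_{c+j}$, so the windows agree before $j$ and first disagree at step $j$. Since $b_{a+j}=b_{c+j}$ and $t_n\equiv b_n\pmod 2$, we get $t_{a+j}\equiv t_{c+j}\pmod 2$. Hence the difference $t_{a+j}-t_{c+j}$ is even and, by the choice of $j$, nonzero. The alphabet bound $|t_n|\le2$ further restricts it to $\{\pm2,\pm4\}$, which could be added as a refinement but is not needed.

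The only point needing care is the congruence $t_n\equiv m_n\pmod2$. It follows from $t_n=2m_{n+1}-3m_n\equiv -3m_n\equiv m_n\pmod 2$, which I would write out explicitly rather than cite. With that, nothing substantive remains. The minimal choice of $j$ is made by well-ordering of the finite set of disagreeing indices (in the formalization, a \texttt{Nat.find}). I do not expect a real obstacle: the proposition only packages the parity reduction, and no Diophantine input is used.
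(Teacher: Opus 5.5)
Your proposal is correct and follows the paper's own proof: the same case split on whether the $T$-windows agree, followed by taking the least disagreement index $j$. There, equal parities together with $t_n\equiv m_n\pmod 2$ force an even difference, which is nonzero by the choice of $j$.
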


\begin{proof}[Proof]
If the $T$-windows agree, we are in the first case. Otherwise let $j$ be the
least index of disagreement; there $b_{a+j}=b_{c+j}$ forces
$t_{c+j}-t_{a+j}$ even, and it is nonzero by choice of $j$.
\end{proof}

\begin{remark}
While the two orbit windows remain $T$-synchronized their $\eps$-difference
expands by exactly $3/2$ per step,
\[
\eps_{c+j}-\eps_{a+j}=(3/2)^j(\eps_c-\eps_a).
\]
Consequently a \emph{maximal} desynchronization $|t_{c+j}-t_{a+j}|=4$ after $j$
synchronized steps certifies $|\eps_c-\eps_a|\ge (2/3)^{j+1}$, whereas a minimal
one $|t_{c+j}-t_{a+j}|=2$ carries no such constraint. This is the precise sense in
which superlinearity for $b$ is harder; we claim no unconditional linear bound for
$p_b$.
\end{remark}

\section{Reduction to a Diophantine kernel}\label{sec:stage1}

\begin{definition}[The kernel]\label{def:kernel}
For a rational scale $\theta\in(0,1)$ let
\[
   V(\theta)\;:=\;\big\{(a,c):\,2\le a<c,\ \dnorm{(3/2)^c-(3/2)^a}\le\theta^{\,c}\big\}.
\]
We say \emph{pair repulsion} holds at $\theta$ if $V(\theta)$ is finite, and we
call the assertion
\[
   \text{(K)}\qquad V(\theta)\ \text{is finite for every rational}\ \theta\in(0,1)
\]
the \emph{Diophantine kernel}. The target property is that $\pT$ be
\emph{superlinear}: for every $C$ there is a $K$ with $\pT(k)>C\,k$ for all
$k\ge K$.
\end{definition}

Restricting $\theta$ to the rationals loses nothing: $V(\theta)$ is monotone in
$\theta$ and $\Q$ is dense, so the rational and real formulations of (K) are
equivalent. This keeps the reduction inside $\Q$.

\begin{theorem}[Reduction]\label{thm:reduction}
The kernel {\rm(K)} implies that $\pT$ is superlinear.
\end{theorem}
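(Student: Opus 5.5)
We argue by contraposition, combining a pigeonhole step with Lemma~\ref{lem:contraction} and Proposition~\ref{prop:ceiling}. Suppose $\pT$ is not superlinear. Then there is a constant $C\ge1$ such that $\pT(k)\le C\,k$ for infinitely many $k$. Fix a rational $\theta\in(0,1)$ with $\theta^{C+1}\ge 2/3$; for instance any rational in $\big[(2/3)^{1/(C+1)},1\big)$ works. We will produce infinitely many distinct pairs in $V(\theta)$, contradicting (K).

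\emph{Pigeonhole.} Fix one of the infinitely many $k$ with $\pT(k)\le Ck$. Consider the length-$k$ windows starting at the positions $2,3,\dots,\lfloor Ck\rfloor+2$. There are $\lfloor Ck\rfloor+1>\pT(k)$ of them, so two coincide. This gives a repetition of length $k$ at some $2\le a<c$ with
\[
c\le Ck+2\le (C+1)k
\]
once $k\ge2$.

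\emph{Transfer to the kernel.} Lemma~\ref{lem:contraction} gives $|\eps_c-\eps_a|\le(2/3)^k$. Since $(3/2)^c-(3/2)^a=(m_c-m_a)+(\eps_c-\eps_a)$ with $m_c-m_a\in\Z$, it follows that
\[
\dnorm{(3/2)^c-(3/2)^a}\le(2/3)^k\le\theta^{(C+1)k}\le\theta^{c}.
\]
The last step uses $\theta<1$ and $c\le(C+1)k$. Hence $(a,c)\in V(\theta)$.

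\emph{Infinitely many distinct pairs.} This is the only point needing care: a single pair could a priori serve many values of $k$. Proposition~\ref{prop:ceiling} rules this out, because a repetition of length $k$ at $2\le a<c$ forces $k\le(c+1)\log_2(3/2)$. Thus the $c$ attached to $k$ tends to infinity as $k$ runs through the infinite set of admissible lengths. Consequently $V(\theta)$ contains pairs with arbitrarily large $c$, so $V(\theta)$ is infinite, contradicting (K).

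The main obstacle is only bookkeeping. We must keep $c$ linear in $k$ so that the single base $\theta$ absorbs the factor $(2/3)^k$. We must also ensure the pairs found are genuinely distinct, which the growth ceiling handles. Everything else is already provided by Stage~0.
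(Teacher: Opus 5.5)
Your proof is correct and follows the paper's argument: pigeonhole on the windows starting at $2,\dots,\lfloor Ck\rfloor+2$, then Lemma~\ref{lem:contraction} to place the repeated pair in a single $V(\theta)$ with $\theta$ depending only on $C$, then the growth ceiling of Proposition~\ref{prop:ceiling} to force $c\to\infty$. The differences are cosmetic: you absorb the additive $2$ via $k\ge2$ and use $\theta^{C+1}\ge 2/3$ where the paper takes $\theta^{C+2}\ge 2/3$, and you spell out the negation of superlinearity as ``$\pT(k)\le Ck$ for infinitely many $k$,'' which the paper leaves implicit.
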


\begin{proof}[Proof]
Fix $C$ and suppose $\pT(k)\le C\,k$ for some large $k$. Among the $Ck+1$ windows
starting at positions $2,\dots,Ck+2$, pigeonhole produces two equal factors,
i.e.\ a repetition of length $k$ at some $2\le a<c\le (C+2)k$. Choose once and for
all a rational scale $\theta<1$ with $\theta^{\,C+2}\ge 2/3$ (a Bernoulli
estimate provides one). By Lemma~\ref{lem:contraction},
\[
   \dnorm{(3/2)^c-(3/2)^a}\le |\eps_c-\eps_a|\le (2/3)^k\le \theta^{\,(C+2)k}\le\theta^{\,c},
\]
so $(a,c)\in V(\theta)$. As $k$ grows, the growth ceiling
(Proposition~\ref{prop:ceiling}, in the form $41k\le 24c+24$) forces $c\to\infty$,
contradicting the finiteness of $V(\theta)$ granted by (K). Hence no such $C$
bounds $\pT(k)/k$.
\end{proof}

The kernel is thus the sole remaining obstacle, and it is unconditional as a
\emph{reduction}: (K) enters Theorem~\ref{thm:reduction} only as a hypothesis.
The rest of the paper establishes it.

\section{Two unconditional slices via Corvaja--Zannier}\label{sec:stage2b}

The kernel (K) is a two-parameter statement (in $a$ and the gap $s=c-a$); its
one-parameter slices are governed by the following theorem.

Both that theorem and its three-variable companion in
Section~\ref{sec:stage2c} descend from the subspace theorem, and it is worth
saying informally what that theorem does before the formulas begin. Its ancestor
is Roth's theorem: an algebraic irrational admits no rational approximations
appreciably better than the trivial ones. Schmidt's subspace theorem is the
higher-dimensional form of the same phenomenon. If several linear forms take
simultaneously much smaller values at an integer point than the size of that
point warrants, then the point is not free to lie anywhere: all such points fall
into finitely many proper subspaces, that is, they satisfy one of finitely many
fixed linear relations with fixed integer coefficients. Unusually good
approximation remains possible, but only along degenerate families. This bites
here because the quantities we must control, $(3/2)^a$ and $(3/2)^c$, are
$S$-units for $S=\{\infty,2,3\}$: huge numbers, of height growing exponentially
in the exponent, and yet built from the two primes $2$ and $3$ alone and hence
described by very few integer parameters. To say that a small integer
combination of such numbers lies unexpectedly close to an integer is precisely
to say that a linear form is unexpectedly small at a large integer point. The
$S$-arithmetic version of the theorem, in which smallness is measured at the
primes $2$ and $3$ as well as at the real place, is what makes this sparse
multiplicative structure visible to the machinery, and its verdict is rigidity:
an infinite family of such near-misses must satisfy one fixed linear relation.
Such a relation in turn pins down the ratio of the $S$-units involved, or the
integer they approximate, and elementary arithmetic --- usually nothing more
than the oddness of $3^c$ --- then contradicts it.

Recall
\cite[\S2]{CZ04} that a real algebraic number $\alpha$ is
\emph{pseudo-Pisot} if $|\alpha|>1$, all its conjugates have absolute value
strictly below $1$, and its trace $\Tr_{\Q(\alpha)/\Q}(\alpha)$ is a rational
integer.

\begin{theorem}[Corvaja--Zannier Main Theorem \cite{CZ04}]\label{thm:CZ}
Let $\Grp\subset\Qbar^{\times}$ be a finitely generated multiplicative group of
algebraic numbers, let $\delta\in\Qbar^{\times}$ be fixed, and let $\epsilon>0$.
Then there are only finitely many pairs $(q,u)\in\Z\times\Grp$, with
$d=[\Q(u):\Q]$, such that $|\delta q u|>1$, $\delta q u$ is not pseudo-Pisot, and
\[
   0\;<\;\dnorm{\delta q u}\;<\;\hgt(u)^{-\epsilon}\,|q|^{-d-\epsilon}.
\]
\end{theorem}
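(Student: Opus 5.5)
The plan is to argue by contradiction, following the Corvaja--Zannier strategy through the $S$-arithmetic subspace theorem. Suppose there are infinitely many pairs $(q,u)$ as in Theorem~\ref{thm:CZ}. Fix a number field $K$, Galois over $\Q$, containing $\delta$ and a set of generators of $\Grp$. Then every $u\in\Grp$ lies in $K$, and $d=[\Q(u):\Q]$ divides $[K:\Q]$. Since the subfields of $K$ are finite in number, passing to an infinite subsequence lets me assume that $F:=\Q(u)$ is the same for all solutions. Let $S$ be the finite set of places of $K$ consisting of the archimedean ones together with those at which $\delta$ or some generator of $\Grp$ is not a unit. Then $u$ is an $S$-unit, and $\delta q u$ becomes an $S$-integer after multiplication by a fixed denominator $D$. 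Fix the real embedding in which $|\delta q u|$ is measured, write $p$ for the nearest integer to $\delta q u$, and let $\sigma_1=\mathrm{id},\dots,\sigma_d$ be the embeddings of $F$.

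\textbf{The subspace step.} I would apply the subspace theorem to the integral point
\[
  \mathbf x=\big(Dp,\;D\sigma_1(\delta q u),\dots,D\sigma_d(\delta q u)\big),
\]
taken as a vector of $S$-integers of $K$ (placing $\delta$ suitably; if $\delta\notin F$ I replace $F$ by $F(\delta)$, which only enlarges the fixed field). At the distinguished archimedean place I use the linear form $x_0-x_1$; its value is $D(p-\delta qu)$, of size $\ll\dnorm{\delta q u}$. At every other place I use coordinate forms. The product of the $S$-norms of the coordinates $\sigma_i(\delta q u)$ is controlled by the product formula: each coordinate contributes only $|q|$-powers, since the $S$-unit parts cancel to $1$. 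The height of $\mathbf x$ is then $\ll \hgt(u)^{O(1)}|q|^{O(1)}$. The hypothesis $\dnorm{\delta qu}<\hgt(u)^{-\epsilon}|q|^{-d-\epsilon}$ is exactly what is needed to make the double product $\prod_{v\in S}\prod_i |L_{i,v}(\mathbf x)|_v/\|\mathbf x\|_v$ smaller than $\hgt(\mathbf x)^{-\epsilon'}$. The exponent $d$ on $|q|$ accounts for the $d$ conjugate coordinates each carrying a factor $q$, and the lower bound $0<\dnorm{\cdot}$ guarantees that $x_0-x_1\neq0$. The subspace theorem then places all our points in finitely many proper subspaces, so after a further subsequence a single nontrivial relation
\[
  c_0p+\sum_i c_i\,\sigma_i(\delta q u)=0
\]
holds with fixed coefficients.

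\textbf{Exploiting the relation.} This is the step I expect to be the main obstacle. First suppose $c_0\ne0$. Then $p=-\sum_i (c_i/c_0)\sigma_i(\delta qu)$, and comparing with $p\approx\delta qu$ gives a relation among the conjugates of the shape $\sum_i c'_i\sigma_i(u)=\text{(small)}$. I would substitute this back and run an induction on the number of coordinates, re-applying the subspace theorem to the shorter point. The induction ends in one of two ways. In the first, the relation reduces to $p=\sum_i\sigma_i(\delta qu)$, the trace; then all the other conjugates must be small, so $|\sigma_i(\delta qu)|<1$ for $i\ne1$, the trace is a rational integer, and $\delta qu$ is pseudo-Pisot, which is excluded. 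In the second, the relation forces $\delta q u$ to be exactly rational-integral, contradicting $\dnorm{\delta qu}>0$. Now suppose $c_0=0$. The relation is then an $S$-unit equation in the conjugates $\sigma_i(u)$. By the finiteness theorem for nondegenerate $S$-unit equations, ratios $\sigma_i(u)/\sigma_j(u)$ eventually take finitely many values. Along a subsequence this makes $u=u_0w$ with $w$ ranging in a subgroup on which the offending conjugates coincide, which effectively lowers $d$, and I would restart the argument with the smaller degree. The delicate point throughout is bookkeeping the vanishing subsums and the degenerate subgroups so that the induction terminates. Torsion in $\Grp$ is absorbed into $\delta$ by partitioning into finitely many cosets.
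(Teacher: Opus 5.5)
\textbf{The subspace step fails at the archimedean places $v\ne v_0$.} You use coordinate forms there, so the double product contains $\prod_{v\mid\infty,\,v\ne v_0}|Dp|_v\asymp|p|^{1-w_0}$. Here $w_0$ is the weight of $v_0$, normalized so that $\prod_{v\mid\infty}|m|_v=|m|$ for $m\in\Z$; as soon as $K$ has more than one archimedean place, $w_0<1$. The product formula absorbs only the $S$-unit parts of the conjugate coordinates. But $p\approx\delta qu$ is neither an $S$-unit nor bounded, so all you get is a bound $\ll\dnorm{\delta qu}^{w_0}|p|^{1-w_0}|q|^d$, which the hypothesis does not make small.

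Concretely, take $K=\Q(\sqrt2)$, $u=(1+\sqrt2)^n$, $\delta=q=1$, $p=u+\bar u$. Your forms give exactly $|p-u|^{1/2}|p|^{1/2}\asymp1$ for a point of height $\asymp(1+\sqrt2)^n$, even though $\dnorm{u}=(\sqrt2-1)^n$. (This example is pseudo-Pisot, but it shows your forms cannot certify smallness.)

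The conjugate coordinates exist precisely to fix this. Write each archimedean $v$ as $|\cdot|_v=|\iota g_v(\cdot)|^{w_v}$ with $g_v\in\mathrm{Gal}(K/\Q)$. Take $S$ Galois-stable and coordinates $x_0=p$, $x_i=\sigma_i(qu)$. At $v$ use the forms $X_0-g_v^{-1}(\delta)X_{j(v)},X_1,\dots,X_d$, where $\sigma_{j(v)}=g_v^{-1}|_F$. Then $|L_{0v}(\mathbf x)|_v=|p-\delta qu|^{w_v}$ at \emph{every} archimedean place, and the double product is at most $\dnorm{\delta qu}\,|q|^d$, which is exactly what the hypothesis controls. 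This also shows that $\delta$ must sit in the coefficients. If you adjoin it to $F$ and use the coordinates $\sigma_i(\delta qu)$, you get $[F(\delta):\Q]>d$ factors $|q|$, which the exponent $-d-\epsilon$ does not cover.

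\textbf{The branch $c_0\ne0$, which you rightly flag, is only a plan.} The two ``endings'' are asserted, not derived. Smallness of $\sum_{i\ge2}\sigma_i(\delta qu)$ does not by itself make each conjugate small. What carries the argument is the following sequence:
\begin{itemize}
\item First do the degree reduction of your $c_0=0$ branch, so that no ratio $\sigma_i(u)/\sigma_j(u)$, $i\ne j$, is constant on an infinite subsequence.
\item Then use that $p\in\Z$ is Galois-invariant. Applying $g\in\mathrm{Gal}(K/\Q)$ to $p=\sum_ib_i\sigma_i(qu)$ and subtracting gives exact $S$-unit relations. These must now be trivial, which forces $b_i=\sigma_i(\beta)$ for a single $\beta\in F$, i.e.\ $p=\Tr_{F/\Q}(\beta qu)$.
\item Finally, apply to $p-\delta qu=(\beta-\delta)qu+\sum_{i\ge2}\sigma_i(\beta qu)$ a further consequence of the subspace theorem: a sum of $S$-units without vanishing subsums is not much smaller than its largest term. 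This gives $\beta=\delta$ and $|\sigma_i(\delta qu)|<1$ for all $i\ge2$, hence $\delta qu$ is pseudo-Pisot.
\end{itemize}

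\textbf{Comparison with the paper.} The paper does not reprove the general theorem. It cites \cite{CZ04} and, in the formalization, derives only the specialization of Remark~\ref{rem:CZspec} ($\Grp=\langle2,3\rangle$, $\delta\in\Q$, $d=1$) by a single two-variable application to $(p,\delta qu)$. There $K=\Q$ has one archimedean place, so coordinate forms at the finite places of $S$ cost nothing. A relation $c_0p+c_1\delta qu=0$ is excluded because it would either make $\delta qu$ an integer or, using $|\delta qu|>1$, keep $\dnorm{\delta qu}$ bounded below. Your argument specialized to $d=1$ is exactly this, and that case is all the paper needs.
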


This is the Main Theorem of \cite[p.~2]{CZ04}, a consequence of the Schmidt
subspace theorem (their Lemma~3, over $\Q$ a single two-variable application).
It does not enter the formal development as an axiom: the specialization of
Remark~\ref{rem:CZspec} is proved there from the Evertse--Schlickewei subspace
theorem (see Remark~\ref{rem:status}).

\begin{remark}[The specialization used]\label{rem:CZspec}
We invoke Theorem~\ref{thm:CZ} only for $\Grp=\langle 2,3\rangle$ (so
$u=2^x3^y$, encoded by the exponent pair, and $d=1$), with positive integer
multipliers $q\ge1$. Over $\Q$ the pseudo-Pisot exclusion, given $|\delta qu|>1$,
is exactly ``$\delta qu\notin\Z$'', which is automatic once
$\dnorm{\delta qu}>0$. Each of these is a restriction of the source statement,
hence harmless.
\end{remark}

\begin{theorem}[Bounded-gap slice]\label{thm:2b}
For every fixed gap $s_0\ge1$ and every rational $\theta\in(0,1)$, only finitely
many $a\ge2$ satisfy
\[
\dnorm{(3/2)^{a+s_0}-(3/2)^{a}}\le\theta^{\,a+s_0}.
\]
\end{theorem}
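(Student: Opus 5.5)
Fix $s_0\ge1$ and a rational $\theta\in(0,1)$. The plan is to write the orbit difference as one element of $\Grp$ times a fixed rational constant and an integer multiplier, and then apply Theorem~\ref{thm:CZ} in the specialization of Remark~\ref{rem:CZspec}. The natural choice is
\[
(3/2)^{a+s_0}-(3/2)^a=\Big(\tfrac{3^{s_0}-2^{s_0}}{2^{s_0}}\Big)\,(3/2)^a ,
\]
so I take $\delta:=(3^{s_0}-2^{s_0})/2^{s_0}\in\Q^\times$, which is fixed once $s_0$ is fixed and nonzero because $s_0\ge1$. I take $q:=1$ and $u:=(3/2)^a\in\Grp$. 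Then $d=1$ and $\hgt(u)=3^a$. For $a\ge2$ we have $|\delta q u|>1$; if needed, I discard finitely many small $a$ to ensure this, since $\delta>0$ and $u\to\infty$.

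The first step is to secure the strict positivity $\dnorm{\delta u}>0$. Lemma~\ref{lem:floor} gives exactly this, since $\dnorm{\delta u}\ge 2^{-(a+s_0)}>0$. In particular $\delta u\notin\Z$, so $\delta u$ is not pseudo-Pisot, by Remark~\ref{rem:CZspec}.

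The second step is to compare the hypothesis $\dnorm{\delta u}\le\theta^{a+s_0}$ with the Corvaja--Zannier threshold $\hgt(u)^{-\epsilon}|q|^{-1-\epsilon}=3^{-\epsilon a}$. For this I choose $\epsilon>0$ so small that $3^{\epsilon}<1/\theta$, for instance $\epsilon=\tfrac12\log(1/\theta)/\log 3$. Then
\[
\theta^{a+s_0}\le\theta^{a}<3^{-\epsilon a}\qquad\text{for all } a\ge1 .
\]
So every $a$ in the slice satisfies $0<\dnorm{\delta q u}<\hgt(u)^{-\epsilon}|q|^{-d-\epsilon}$ with the fixed pair $(\delta,\epsilon)$. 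Theorem~\ref{thm:CZ} then gives only finitely many pairs $(q,u)=(1,(3/2)^a)$. Since $a\mapsto(3/2)^a$ is injective, only finitely many $a$ remain.

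I expect no real obstacle; the work is bookkeeping. One point is that $\epsilon$ must be a fixed positive real, or a rational in the formal setting; a rational $\epsilon$ with $3^{\epsilon}<\theta^{-1}$ exists by density, and this can be certified as an integer inequality $3^{p}\theta_{\mathrm{num}}^{r}<\theta_{\mathrm{den}}^{r}$ after writing $\epsilon=p/r$. The other point is the positivity of the lower bound. That is the only place where arithmetic beyond Theorem~\ref{thm:CZ} enters, and Lemma~\ref{lem:floor} supplies it through the oddness of $3^a(3^{s_0}-2^{s_0})$.
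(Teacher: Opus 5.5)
Your proposal is correct and follows the paper's proof essentially step for step. You use the same Corvaja--Zannier data $\delta=(3/2)^{s_0}-1$, $q=1$, $u=(3/2)^a$, strict positivity from Lemma~\ref{lem:floor}, the same choice $\epsilon=\log(\theta^{-1})/(2\log 3)$, and the same transfer of finiteness through injectivity of $a\mapsto(3/2)^a$.
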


\begin{proof}[Proof]
Apply Theorem~\ref{thm:CZ} with the data
\[
   \delta=(3/2)^{s_0}-1,\qquad q=1,\qquad u=(3/2)^{a}\in\Grp,
\]
so that
\[
   \delta q u=(3/2)^{a+s_0}-(3/2)^{a},\qquad d=[\Q(u):\Q]=1,\qquad \hgt(u)=3^{a}.
\]
Here $\delta$ is a fixed nonzero rational; since $s_0\ge1$ it satisfies
$\delta\ge\tfrac12$.

\emph{The side hypotheses.} For $a\ge2$ we have $(3/2)^a\ge\tfrac94$, hence
\[
   |\delta qu|=\big((3/2)^{s_0}-1\big)(3/2)^{a}\;\ge\;\tfrac12\cdot\tfrac94
   \;=\;\tfrac98\;>\;1 .
\]
Lemma~\ref{lem:floor} gives $\dnorm{\delta qu}\ge2^{-(a+s_0)}>0$, and over $\Q$
that positivity is exactly the statement that $\delta qu$ is not an integer,
which by Remark~\ref{rem:CZspec} discharges the pseudo-Pisot exclusion.

\emph{The threshold.} Put $L:=\log\theta^{-1}>0$ and choose
\[
   \epsilon\;:=\;\frac{L}{2\log 3}\;>\;0 .
\]
Because $0<\theta<1$ and $s_0\ge1$ we have $\theta^{\,a+s_0}\le\theta^{a}$, so it
is enough to compare $\theta^{a}$ with $(3^{a})^{-\epsilon}$. Both are positive,
and taking logarithms,
\[
   \log\big((3^{a})^{-\epsilon}\big)\;=\;-\epsilon\,a\log 3\;=\;-\frac{aL}{2}
   \;>\;-aL\;=\;a\log\theta\;=\;\log\big(\theta^{a}\big),
\]
the strict inequality holding because $aL>0$ for $a\ge1$. Therefore
\[
   \dnorm{\delta qu}\;\le\;\theta^{\,a+s_0}\;\le\;\theta^{a}
   \;<\;(3^{a})^{-\epsilon}\;=\;\hgt(u)^{-\epsilon}|q|^{-d-\epsilon},
\]
the final equality because $q=1$ and $d=1$.

\emph{Transfer of finiteness.} Theorem~\ref{thm:CZ} thus admits only finitely
many pairs $(q,u)=\big(1,(3/2)^a\big)$, and $a\mapsto(3/2)^{a}$ is injective, so
only finitely many $a$ occur.
\end{proof}

\begin{corollary}[Gap-bounded slice]\label{cor:gapbounded}
For every $S$ and every rational $\theta\in(0,1)$, the set of $(a,c)\in V(\theta)$
with $c\le a+S$ is finite.
\end{corollary}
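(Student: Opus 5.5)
The plan is to decompose the set in question by gap: every $(a,c)\in V(\theta)$ with $c\le a+S$ has gap $s=c-a$ with $1\le s\le S$, so
\[
\{(a,c)\in V(\theta): c\le a+S\}\;=\;\bigcup_{s=1}^{S}\{(a,a+s): a\ge 2,\ (a,a+s)\in V(\theta)\}.
\]
This is a finite union, so it suffices to show that each piece is finite.

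For a fixed gap $s_0\in\{1,\dots,S\}$, membership $(a,a+s_0)\in V(\theta)$ says exactly that $a\ge2$ and $\dnorm{(3/2)^{a+s_0}-(3/2)^{a}}\le\theta^{\,a+s_0}$. By Theorem~\ref{thm:2b}, applied with this $s_0$ and the same rational $\theta$, only finitely many $a$ satisfy this. The map $a\mapsto(a,a+s_0)$ is injective, so the piece with gap $s_0$ is finite.

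A finite union of finite sets is finite, which gives the corollary. There is no real obstacle here. The only point needing care is that the condition $a<c$ in the definition of $V(\theta)$ forces $s\ge1$, which puts each gap within the range where Theorem~\ref{thm:2b} applies. The case $S=0$ is vacuous, since then the set is empty.
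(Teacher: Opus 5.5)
Your proposal is correct and is essentially the paper's argument: the paper likewise writes the set as the union over gaps $1\le s_0\le S$ of the finite slices given by Theorem~\ref{thm:2b}, transported by $a\mapsto(a,a+s_0)$. Your remarks on $s\ge1$ and the vacuous case $S=0$ are harmless extra care.
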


\begin{proof}[Proof]
It is the union over $1\le s_0\le S$ of the finite slices of
Theorem~\ref{thm:2b}, under $(a,c)\mapsto (a,a+s_0)$.
\end{proof}

The second slice uses the theorem's uniformity in the \emph{integer} multiplier
slot $q$, and is the observation that makes the huge-gap band fall.

\begin{theorem}[Huge-gap band]\label{thm:2bprime}
For every rational $\theta\in(0,1)$ there is a rational $\eps'>0$ such that only
finitely many pairs $(a,c)\in V(\theta)$ lie in the band $a\le\eps'\,c$.
\end{theorem}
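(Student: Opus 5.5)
The plan is to apply Theorem~\ref{thm:CZ} once more, now putting the \emph{small} exponent $a$ into the integer multiplier slot $q$. The point is that multiplying by $2^a$ clears the denominator of $(3/2)^a$ altogether. The starting observation is the elementary estimate $\dnorm{N y}\le N\dnorm{y}$ for $N\in\N$, applied with $N=2^a$ and $y=(3/2)^c-(3/2)^a$. Since $2^a(3/2)^a=3^a\in\Z$, we get
\[
   \dnorm{2^a(3/2)^c}\;=\;\dnorm{2^a\big((3/2)^c-(3/2)^a\big)}\;\le\;2^a\,\dnorm{(3/2)^c-(3/2)^a}\;\le\;2^a\theta^{\,c}
\]
for every $(a,c)\in V(\theta)$. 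So I would use the data $\delta=1$, $q=2^a$, $u=(3/2)^c\in\Grp$, with $d=1$ and $\hgt(u)=3^c$.

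The side hypotheses should be routine. First, $|\delta qu|=3^c/2^{c-a}\ge (3/2)^c>1$. Second, $\delta qu=3^c/2^{c-a}$ is not an integer because $c>a$ and $3^c$ is odd. Hence $\dnorm{\delta qu}>0$, and by Remark~\ref{rem:CZspec} this also discharges the pseudo-Pisot exclusion.

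The only real work is the choice of constants so that the threshold of Theorem~\ref{thm:CZ} is met. We need
\[
   2^a\theta^{\,c}\;<\;(3^c)^{-\epsilon}\,(2^a)^{-1-\epsilon},
\qquad\text{i.e.}\qquad
   (2+\epsilon)\,a\log 2+\epsilon\, c\log 3\;<\;cL,
\]
where $L=\log\theta^{-1}>0$. I would take $\epsilon:=L/(4\log 3)$, so that the $\epsilon c\log3$ term uses only a quarter of the budget $cL$. I would then choose a rational $\eps'>0$ with $(2+\epsilon)\eps'\log 2<L/2$, for instance any rational $\eps'<L/(6\log 2)$, assuming $\epsilon\le 1$. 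On the band $a\le\eps' c$ the left side is then at most $\tfrac34 cL<cL$ for all $c\ge1$. As in Theorem~\ref{thm:2b}, this comparison is done in logarithms of positive quantities. For the formalization it must be converted into an exact rational inequality, or at least proved with real logarithms and then specialized.

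Finally, Theorem~\ref{thm:CZ} yields only finitely many pairs $(q,u)=\big(2^a,(3/2)^c\big)$. Since the maps $a\mapsto 2^a$ and $c\mapsto(3/2)^c$ are both injective, only finitely many $(a,c)$ lie in $V(\theta)$ within the band.

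I expect the main obstacle to be simply noticing the multiplier trick. The naive encoding with $u=(3/2)^a$ and $\delta$ depending on $s=c-a$ fails because $\delta$ must be fixed. The encoding $q=3^s-2^s$ fails because $|q|^{-1}\approx 3^{-c}$ is far below $\theta^c$ when $\theta$ is close to $1$. Putting the factor $2^a$ into $q$ costs only a factor $2^{(2+\epsilon)a}$, and the band condition $a\le\eps' c$ is exactly what makes this affordable.
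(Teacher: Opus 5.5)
Your proposal is correct and follows the paper's route. You multiply by $2^a$ to remove $(3/2)^a$ modulo $\Z$, apply Theorem~\ref{thm:CZ} with $\delta=1$, and let the band $a\le\eps' c$ pay for the $|q|^{-1-\epsilon}$ factor. The only difference is the split of $3^c/2^{c-a}$: you take $q=2^a$, $u=(3/2)^c$ with $\hgt(u)=3^c$, while the paper takes $q=3^a$, $u=(3/2)^{c-a}$ with $\hgt(u)=3^{c-a}$, and both budgets close in the same way.

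Your illustrative choice of a rational $\eps'<L/(6\log 2)$ does require $\epsilon\le 1$, that is $\theta\ge 1/81$. The paper secures this by taking $\epsilon=\min\{1,L/(4\log 3)\}$. Your general condition $(2+\epsilon)\eps'\log 2<L/2$ needs no such assumption.
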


\begin{proof}[Proof]
Abbreviate
\[
   L:=\log\theta^{-1}>0,\qquad D:=\log 2+2\log 3>0,
\]
and fix once and for all the two parameters
\[
   \epsilon\;:=\;\min\Big\{1,\ \frac{L}{4\log 3}\Big\}\;>\;0,
   \qquad
   \eps'\in\Q\ \text{ with }\ 0<\eps'<\frac{3L}{4D};
\]
such a rational $\eps'$ exists because the right-hand bound is a positive real.
We show that this $\eps'$ has the asserted property. Let $(a,c)\in V(\theta)$
with $a\le\eps'c$, and write $s:=c-a\ge1$; recall $a\ge2$ from
Definition~\ref{def:kernel}.

\emph{Step 1: moving the $2$-part into the integer slot.} Multiplying the orbit
difference by $2^a$ clears its denominator down to $2^s$,
\[
   2^{a}\big((3/2)^{c}-(3/2)^{a}\big)
   \;=\;3^{a}(3/2)^{s}-3^{a},
\]
so the two sides differ by the integer $3^a$ and have the same distance to $\Z$.
With $\dnorm{Nx}\le N\dnorm{x}$ for a positive integer $N$ this gives
\[
   \dnorm{3^{a}(3/2)^{s}}
   \;=\;\dnorm{2^{a}\big((3/2)^{c}-(3/2)^{a}\big)}
   \;\le\;2^{a}\,\dnorm{(3/2)^{c}-(3/2)^{a}}
   \;\le\;2^{a}\theta^{c},
\]
the last step by $(a,c)\in V(\theta)$.

\emph{Step 2: the Corvaja--Zannier data.} Apply Theorem~\ref{thm:CZ} with
\[
   \delta=1,\qquad q=3^{a}\in\Z_{>0},\qquad u=(3/2)^{s}\in\Grp,
\]
so that $d=1$ and $\hgt(u)=3^{s}$. Since $a\ge2$ and $s\ge1$,
\[
   |\delta qu|=3^{a}(3/2)^{s}\;\ge\;9\cdot\tfrac32\;>\;1 .
\]
Moreover $3^{a}(3/2)^{s}=3^{a+s}/2^{s}$ has odd numerator and $s\ge1$, so it is
not an integer and in fact $\dnorm{qu}\ge2^{-s}>0$; over $\Q$ this simultaneously
discharges the pseudo-Pisot exclusion (Remark~\ref{rem:CZspec}).

\emph{Step 3: the threshold, in full.} The inequality demanded by
Theorem~\ref{thm:CZ} is
\[
   \dnorm{qu}\;<\;\hgt(u)^{-\epsilon}|q|^{-d-\epsilon}
   \;=\;(3^{s})^{-\epsilon}(3^{a})^{-1-\epsilon},
\]
so by Step~1 it suffices to prove
$2^{a}\theta^{c}<(3^{s})^{-\epsilon}(3^{a})^{-1-\epsilon}$. Both sides are
positive; taking logarithms and using $\log\theta=-L$, this is
\[
   a\log 2-cL\;<\;-\epsilon\,s\log 3-(1+\epsilon)\,a\log 3,
\]
that is, after moving all terms to the left,
\begin{equation}\tag{$\ast$}\label{eq:window}
   a\log 2+(1+\epsilon)\,a\log 3+\epsilon\,s\log 3\;<\;c\,L .
\end{equation}
We bound the left-hand side of \eqref{eq:window} in two pieces. For the first
two terms, $\epsilon\le1$ gives $(1+\epsilon)\log3\le2\log3$, and then
$a\le\eps'c$ together with $\eps'D<\tfrac34L$ gives
\[
   a\log 2+(1+\epsilon)\,a\log 3\;\le\;a\,(\log2+2\log3)\;=\;a\,D
   \;\le\;\eps'\,c\,D\;<\;\tfrac34\,c\,L .
\]
For the third term, $\epsilon\le L/(4\log3)$ and $s\le c$ give
\[
   \epsilon\,s\log 3\;\le\;\frac{L}{4\log 3}\,s\log 3\;=\;\frac{s\,L}{4}
   \;\le\;\frac{c\,L}{4}.
\]
Adding the two estimates,
\[
   a\log 2+(1+\epsilon)\,a\log 3+\epsilon\,s\log 3
   \;<\;\tfrac34\,c\,L+\tfrac14\,c\,L\;=\;c\,L,
\]
which is \eqref{eq:window}. Note where each parameter was used: the truncation
$\epsilon\le1$ converts the unknown exponent $1+\epsilon$ into the fixed
constant $D$; the second branch $\epsilon\le L/(4\log3)$ pays for the
$\hgt(u)^{-\epsilon}$ factor out of one quarter of the budget $cL$; and $\eps'$
pays for the $|q|^{-1-\epsilon}$ factor out of the remaining three quarters.

\emph{Step 4: transfer of finiteness.} Theorem~\ref{thm:CZ} therefore admits
only finitely many pairs $(q,u)=\big(3^{a},(3/2)^{s}\big)$ arising this way. The
maps $a\mapsto3^{a}$ and $s\mapsto(3/2)^{s}$ are injective, so only finitely many
$(a,s)$ occur, hence only finitely many $(a,c)=(a,a+s)$ in the band
$a\le\eps'c$.
\end{proof}

\begin{remark}
The same input yields, as a corollary of Theorem~\ref{thm:2b}, that fixed-gap
repetitions are short in a strong sense: for each $s_0\ge1$ and each certified
rational slope $\mu>0$, only finitely many pairs $(a,k)$ satisfy $k\ge\mu\,a$
together with a repetition of length $k$ at $(a,a+s_0)$.
\end{remark}

After these two slices, what remains open of (K) is exactly the middle band
$\eps' c\le a$ with gap $s=c-a\to\infty$.

\section{The gap dichotomy and superlinear complexity}\label{sec:stage2c}

\begin{definition}[Middle band]\label{def:middle}
For a rational scale $\theta$, a parameter $\eps'>0$ and a threshold $S$, the
\emph{middle band} is
\[
   M(\theta,\eps',S)\;:=\;\big\{(a,c)\in V(\theta):\ \eps'\,c\le a\ \text{and}\ a+S\le c\big\}.
\]
\end{definition}

Corollary~\ref{cor:gapbounded} and Theorem~\ref{thm:2bprime} reduce (K) to the
finiteness of the middle band, and this reduction is already enough for a
conditional form of the main theorem --- the statement of record that depends
only on the refereed theorem of Corvaja--Zannier.

\begin{theorem}[Conditional main theorem]\label{thm:capstone}
Suppose that for every rational $\theta\in(0,1)$ and every rational $\eps'>0$
there is a threshold $S$ making the middle band $M(\theta,\eps',S)$ finite. Then
$\pT$ is superlinear.
\end{theorem}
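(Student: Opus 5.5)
The plan is to verify (K) under the stated hypothesis and then invoke Theorem~\ref{thm:reduction}. Fix a rational $\theta\in(0,1)$. Theorem~\ref{thm:2bprime} supplies a rational $\eps'>0$ such that the band $\{(a,c)\in V(\theta): a\le\eps' c\}$ is finite. With this $\eps'$, the hypothesis supplies a threshold $S$ for which $M(\theta,\eps',S)$ is finite. Corollary~\ref{cor:gapbounded}, applied with this same $S$, shows that $\{(a,c)\in V(\theta): c\le a+S\}$ is finite. Note the order of choices: $\eps'$ depends only on $\theta$, $S$ depends on $(\theta,\eps')$, and Corollary~\ref{cor:gapbounded} holds for every $S$, so there is no circularity.

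Next I show these three sets cover $V(\theta)$. Take $(a,c)\in V(\theta)$. If $a\le\eps' c$, the pair is in the huge-gap band. Otherwise $a>\eps' c$, so $\eps' c\le a$. Then either $c\le a+S$, placing it in the gap-bounded slice, or $c>a+S$, so $a+S\le c$ and it lies in $M(\theta,\eps',S)$. There is a slight boundary overlap (for instance, at $c=a+S$ a pair sits in two sets), but overlap is harmless. A finite union of finite sets is finite, so $V(\theta)$ is finite.

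Since $\theta$ was an arbitrary rational in $(0,1)$, this establishes (K) in the rational form of Definition~\ref{def:kernel}. Theorem~\ref{thm:reduction} then gives superlinearity of $\pT$.

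There is no genuine obstacle here; the statement is an assembly step. The only point needing care is the quantifier bookkeeping: the hypothesis must be invoked for the specific $\eps'$ produced by Theorem~\ref{thm:2bprime} (it is rational, as that theorem guarantees), and the case split must use non-strict versus strict inequalities consistently with the definitions of the three pieces.
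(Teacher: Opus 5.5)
Your proposal is correct and matches the paper's proof essentially step for step. You choose $\eps'$ from Theorem~\ref{thm:2bprime}, then $S$ from the hypothesis, cover $V(\theta)$ by the huge-gap band, the gap-bounded slice of Corollary~\ref{cor:gapbounded} and $M(\theta,\eps',S)$, and conclude (K) and then superlinearity via Theorem~\ref{thm:reduction}; your explicit treatment of the quantifier order and the strict/non-strict case split just spells out the paper's one-line covering argument.
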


\begin{proof}[Proof]
Fix $\theta$. By Theorem~\ref{thm:2bprime} choose $\eps'>0$ with the huge-gap
band $a\le\eps' c$ finite, and by hypothesis choose $S$ with $M(\theta,\eps',S)$
finite. Every violator $(a,c)\in V(\theta)$ falls into one of three finite sets
--- $a\le\eps' c$, or $c\le a+S$ (Corollary~\ref{cor:gapbounded}), or the middle
band --- so $V(\theta)$ is finite. Thus (K) holds, and
Theorem~\ref{thm:reduction} applies.
\end{proof}

The middle band is closed by a dichotomy on the gap. Its infinite branch uses the
following theorem, a repaired form of a recent result of Nair--Kumar--Rout (see
Remarks~\ref{rem:NKR} and \ref{rem:NKRfix}).

The theorem is the three-variable counterpart of Theorem~\ref{thm:CZ}, and the
informal picture drawn at the start of Section~\ref{sec:stage2b} applies
verbatim. There a single $S$-unit, scaled by an integer, was required to lie
close to an integer; here two independent $S$-units $u_1$ and $u_2$ are
combined, and the integer they nearly hit is carried along as a third
coordinate. The subspace theorem is applied to the resulting point and returns
the same verdict of rigidity: an infinite supply of pairs whose combination hugs
an integer far more closely than their size permits cannot really be a
two-parameter family, and infinitely many of its members must obey one fixed
linear relation among the two $S$-units and the nearby integer. Every way this
can happen is then arithmetically impossible. A relation not involving the
integer would freeze the ratio $u_1/u_2$, which the hypothesis of pairwise
distinct ratios forbids; a relation involving it turns the distance to $\Z$ into
an explicit rational combination of $u_1$ and $u_2$, which either has a positive
floor --- bounding the heights, hence the family --- or drives $u_1/u_2$ towards
a fixed rational limit, which Theorem~\ref{thm:CZ} forbids. One hypothesis is
indispensable to all of this: the combination must not already be an integer.
The subspace theorem quantifies how small a nonzero quantity is, so when that
quantity vanishes it has nothing to say. This hypothesis is absent from the
statement as printed in \cite{NKR25}, and the counterexample of
Remark~\ref{rem:NKR} exploits its absence.

\begin{theorem}[$S$-unit pairs near integers; repair of {\cite[Thm.~1.3(i)]{NKR25}}]\label{thm:NKR}
Let $\alpha_1,\alpha_2\in\Q^{\times}$ and $\epsilon_1>0$. Let $\mathcal N$ be an
infinite family of pairs $(u_1,u_2)\in\Grp^2$ with $|u_i|\ge1$ and
$u_1\ne-u_2$, whose ratios are pairwise distinct across the family (in both
orders: distinct members have $u_1/u_2\ne u_1'/u_2'$ and $u_2/u_1\ne
u_2'/u_1'$), and suppose every member satisfies
\[
   0\;<\;\big\lVert\alpha_1u_1+\alpha_2u_2\big\rVert\;<\;\big(\hgt(u_1)\,\hgt(u_2)\big)^{-\epsilon_1}.
\]
Then some pair in $\mathcal N$ has both entries in $\Z$.
\end{theorem}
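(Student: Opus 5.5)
The plan follows the informal outline given just before the statement: a three-variable application of the $S$-arithmetic subspace theorem over $\Q$ with $S=\{\infty,2,3\}$, followed by a case analysis on the subspaces it returns. I will argue by contradiction, assuming no member of $\mathcal N$ has both entries in $\Z$. First I clear denominators. Let $D$ be a common denominator of $\alpha_1,\alpha_2$, and write $u_i=x_i/y_i$ with $x_i,y_i$ coprime and composed of $2,3$ only. Put $y=\mathrm{lcm}(y_1,y_2)$ and let $p$ be the integer nearest to $\alpha_1u_1+\alpha_2u_2$. The point
\[
\mathbf X=\big(D\alpha_1 u_1 y,\; D\alpha_2 u_2 y,\; D p\,y\big)\in\Z^3
\]
then has first two coordinates equal to fixed integers times $S$-units. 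Its third coordinate is an integer whose distance to $D\alpha_1u_1y+D\alpha_2u_2y$ equals $Dy\,\dnorm{\alpha_1u_1+\alpha_2u_2}$.

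Next I choose linear forms at each place. At $\infty$ I take $L_1=X_1$, $L_2=X_2$ and $L_3=X_1+X_2-X_3$. At $v=2,3$ I take the coordinate forms $X_1,X_2,X_3$. The $S$-unit coordinates contribute $\prod_{v\in S}|X_i|_v\ll 1$ by the product formula. The third coordinate has $|X_3|_2|X_3|_3\le 1$, while the $\infty$-form $L_3$ is at most $Dy\,(\hgt(u_1)\hgt(u_2))^{-\epsilon_1}$. Multiplying these, the double product $\prod_v\prod_i|L_{i,v}(\mathbf X)|_v/|\mathbf X|_v$ is bounded by $\hgt(\mathbf X)^{-3-\delta}$ for some $\delta>0$ depending on $\epsilon_1$. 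This uses $\hgt(\mathbf X)\ll (\hgt(u_1)\hgt(u_2))^{O(1)}$ and the fact that $y\le\hgt(u_1)\hgt(u_2)$. The hypothesis $\dnorm{\cdot}>0$ is exactly what guarantees $L_3(\mathbf X)\neq0$, so the theorem applies. The Evertse--Schlickewei subspace theorem then puts all the (projective) points $\mathbf X$ into finitely many proper subspaces. Since $\mathcal N$ is infinite, infinitely many members satisfy one fixed relation $c_1X_1+c_2X_2+c_3X_3=0$.

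I then split into cases on this relation. If $c_3=0$, the relation fixes $u_1/u_2$, contradicting pairwise distinctness of the ratios, so at most one member is possible. If $c_3\ne0$, I solve for $p$ and obtain
\[
\alpha_1u_1+\alpha_2u_2-p=\beta_1u_1+\beta_2u_2
\]
with fixed rationals $\beta_i$. If $(\beta_1,\beta_2)=0$, the distance is zero, which contradicts the hypothesis. If exactly one $\beta_i\neq0$, then $\beta_i u_i$ is tiny, which is impossible when $|u_i|\ge1$ unless heights stay bounded, and bounded heights leave finitely many members. In the remaining case the smallness forces $u_1/u_2\to-\beta_2/\beta_1$ along infinitely many members with distinct ratios. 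The quantities $\beta_1(u_1/u_2)+\beta_2$ are nonzero $S$-unit expressions that are exponentially small in the heights. I rule this out with a two-variable subspace, or Theorem~\ref{thm:CZ} style, bound applied to $\beta_1 u_1+\beta_2u_2$, treating it as $u_2(\beta_1 w+\beta_2)$ with $w=u_1/u_2$. The $p$-adic parts again cooperate, and this forces finitely many $w$, contradicting distinctness. In every case the infinitude of $\mathcal N$ is contradicted. The claimed integrality conclusion would come from the degenerate branch, where the relation makes $\alpha_1u_1+\alpha_2u_2$ an integer, but that branch is excluded by hypothesis. So the conclusion holds vacuously, or arises from a finite exceptional set.

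The main obstacle will be the height bookkeeping in the first step. The denominator $y$ enlarges $\hgt(\mathbf X)$, and I must check that the gain $(\hgt(u_1)\hgt(u_2))^{-\epsilon_1}$ at $\infty$ still beats the loss $Dy$ and yields a genuine exponent $3+\delta$. The way to do this is to compare $\hgt(\mathbf X)$ with $\max|X_i|$ and to use the $2$- and $3$-adic forms to absorb $y$ exactly. If that comparison is too lossy, I would instead normalize by $\max(|u_1|,|u_2|)$ and pick which coordinate forms to use at $2,3$ according to which $u_i$ dominates, following the usual Corvaja--Zannier trick.
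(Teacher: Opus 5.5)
Your proposal is essentially the paper's argument. It uses the same three-variable Evertse--Schlickewei application with $S=\{\infty,2,3\}$: your point and forms become the paper's $(p_0,u_1,u_2)$, with $\alpha_1X_1+\alpha_2X_2-X_0,\,X_1,\,X_2$ at $\infty$, after a fixed diagonal rescaling and permutation of coordinates. It then follows the same case split on the resulting relation, and in the opposite-sign case uses a two-variable subspace bound where the paper cites Theorem~\ref{thm:CZ}, itself a two-variable consequence. So the family is forced to be finite and the conclusion holds vacuously.

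The repair you flag at the end is necessary rather than optional. With only $|X_3|_2|X_3|_3\le 1$, the factor $y$ at $\infty$, which can be a positive power of $\hgt(u_1)\hgt(u_2)$, cannot be paid for by $(\hgt(u_1)\hgt(u_2))^{-\epsilon_1}$ when $\epsilon_1$ is small. Using $|X_3|_2|X_3|_3\le|y|_2|y|_3=y^{-1}$ instead cancels it exactly, and this is the ``exact $p$-adic cancellation'' of the paper's sketch.
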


\begin{proof}[Proof sketch]
One shows that the hypotheses are in fact contradictory --- such a family is
necessarily finite (compare Proposition~3.1 and Remark~3.2 of \cite{NKR25}) ---
and the stated form follows. To each pair attach the point $x=(p_0,u_1,u_2)$
with $p_0$ the integer nearest to $\alpha_1u_1+\alpha_2u_2$, and apply the
Evertse--Schlickewei subspace theorem in three variables with
$S=\{\infty,2,3\}$, the forms $\alpha_1X_1+\alpha_2X_2-X_0$, $X_1$, $X_2$ at the
infinite place and the coordinate forms at $2$ and $3$. Evaluated on the coprime
integer representative of $x$, the double product collapses by exact $p$-adic
cancellation to a multiple of $\dnorm{\alpha_1u_1+\alpha_2u_2}$, and the height
of the representative lies between $\hgt(u_1/u_2)$ and
$\big(\hgt(u_1)\hgt(u_2)\big)^2$; hence every member with $\hgt(u_1/u_2)$ above
a fixed threshold satisfies the subspace inequality with exponent
$-3-\epsilon_1/4$. By pigeonhole, infinitely many members then lie in a single
proper rational subspace, i.e.\ satisfy one fixed relation
$a_0p_0+a_1u_1+a_2u_2=0$. If $a_0=0$, the ratio $u_1/u_2$ is constant along that
subfamily, contradicting distinctness. Otherwise the relation determines $p_0$,
so that $\dnorm{\alpha_1u_1+\alpha_2u_2}=|\beta_1u_1+\beta_2u_2|$ for fixed
rationals $\beta_i$, and every sign pattern of $(\beta_1,\beta_2)$ is
impossible: $\beta_1=\beta_2=0$ contradicts strict positivity; equal signs, or a
single nonzero $\beta_i$, give the distance a positive floor, which bounds the
heights; and opposite signs force $u_1/u_2$ to accumulate at $-\beta_2/\beta_1$,
which the theorem of Corvaja--Zannier (Theorem~\ref{thm:CZ}) forbids along a
ratio-injective family.
\end{proof}

\begin{remark}[Relation to \cite{NKR25}: a counterexample to the printed statement]\label{rem:NKR}
Theorem~1.3(i) of \cite{NKR25} asserts the conclusion above for general number
fields, $m$-tuples and finitely generated $\Grp$, but \emph{without} the
strict-positivity hypothesis $\dnorm{\sum_i\alpha_iu_i}>0$ (the positivity does
appear in their Theorem~1.1(iv)). In that form the statement is false: for
$(\alpha_1,\alpha_2)=(1,1)$ and the family
$(u_1,u_2)=\big(3^m/2,\,3^{2m}/2\big)$, $m\ge1$, the sum $(3^m+3^{2m})/2$ is an
integer by parity, so its distance to $\Z$ is $0$ and every hypothesis of the
printed statement holds (the ratios $3^{-m}$ are pairwise distinct), yet no
entry $3^m/2$, $3^{2m}/2$ is ever an integer. The refutation of the printed
statement is machine-checked in the formal development. The gap in the proof of
\cite[\S4.1]{NKR25} is that the exponent produced there depends on the tuple,
while their Lemma~2.2 requires a fixed one. Theorem~\ref{thm:NKR} as stated ---
with the positivity hypothesis, over $\Q$, for pairs in $\Grp=\langle2,3\rangle$
--- is the only form used here, and it is proved in the formalization as
sketched above. In this setting property {\rm(P1)} of \cite{NKR25} is vacuous,
property {\rm(P2)} is the hypothesis $u_1\ne-u_2$, and ``algebraic integer''
means ``integer''.
\end{remark}

\begin{remark}[The authors' correction]\label{rem:NKRfix}
On being shown the counterexample the authors of \cite{NKR25} confirmed the
defect, and repair it in a different way (private communication; the preprint is
unrevised at the time of writing). The hypotheses of their Theorem~1.3 are left
untouched; conclusion~(i) is weakened from integrality to a uniform bound on
denominators, to the effect that some constant $C>1$ satisfies
$\max\{|u_1|_v,\dots,|u_m|_v\}<C$ at every non-archimedean place $v$, along the
infinite family $\mathcal N_1'$ of that theorem.
Since ``$u$ is an algebraic integer'' says exactly that $|u|_v\le1$ at every
non-archimedean $v$, the printed conclusion is the limiting case $C=1$ with
non-strict inequality, and the correction is a genuine weakening --- one that
the counterexample above
respects: there $|u_i|_2=2$ and $|u_i|_p\le1$ for every other prime, so any
$C>2$ serves.

The two repairs are logically independent: ours restricts the hypotheses and
keeps the strong conclusion, theirs keeps the hypotheses and weakens the
conclusion. Either closes the application made here. Indeed the pair of
Lemma~\ref{lem:NKRbranch} has
\[
   \big|u_1\big|_2=\big|(3/2)^{c}\big|_2=2^{c},
\]
which is unbounded along an infinite family of violators, so the corrected
conclusion is contradicted just as directly as the printed one --- and without
appeal to strict positivity, which the corrected form no longer needs. (It is
immaterial for this use whether the bound is read as holding along all of
$\mathcal N_1'$ or only along the infinite subset the theorem produces.) We
formalize our own repair rather than theirs because the integrality conclusion
is what the proof from the subspace theorem sketched above delivers; on the
status of either repair in the announced generality see the discussion
following Problem~\ref{prob:pq}.
\end{remark}

\begin{lemma}[The Nair--Kumar--Rout branch]\label{lem:NKRbranch}
Fix a rational $\theta\in(0,1)$. Let $\Grp_0\subseteq V(\theta)$ be a family of
violators on which the gap map $(a,c)\mapsto c-a$ is injective. Then $\Grp_0$ is
finite.
\end{lemma}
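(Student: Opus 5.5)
We argue by contradiction: suppose $\Grp_0$ is infinite, and feed it into Theorem~\ref{thm:NKR}. The natural data are
\[
\alpha_1=1,\quad \alpha_2=-1,\quad u_1=(3/2)^{c},\quad u_2=(3/2)^{a},
\]
so that $\alpha_1u_1+\alpha_2u_2=(3/2)^c-(3/2)^a$. Then the integrality conclusion of Theorem~\ref{thm:NKR} asserts that $(3/2)^c\in\Z$ for some member. That is absurd, since $3^c$ is odd and $c\ge1$. So the whole task is to verify the hypotheses of Theorem~\ref{thm:NKR} for the family $\mathcal N=\{((3/2)^c,(3/2)^a):(a,c)\in\Grp_0\}$, after possibly passing to an infinite subfamily.

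The side conditions are routine.
\begin{itemize}
\item $|u_i|\ge1$ holds because $a,c\ge2$.
\item $u_1\ne-u_2$ holds because both are positive.
\item The ratios $u_1/u_2=(3/2)^{c-a}$ are pairwise distinct across the family exactly because the gap map is injective on $\Grp_0$. The reversed ratios $(2/3)^{c-a}$ are then distinct too.
\item Strict positivity $\dnorm{(3/2)^c-(3/2)^a}>0$ is Lemma~\ref{lem:floor}.
\item $\mathcal N$ is infinite because $(a,c)\mapsto((3/2)^c,(3/2)^a)$ is injective.
\end{itemize}

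The substantive step is the upper bound: we must produce a fixed $\epsilon_1>0$ with
\[
\theta^{c}<\big(\hgt(u_1)\hgt(u_2)\big)^{-\epsilon_1}=3^{-\epsilon_1(a+c)}.
\]
Since $a<c$, the exponent satisfies $a+c<2c$, so it suffices that $\theta^c\le 3^{-2\epsilon_1 c}$. With $L=\log\theta^{-1}$ we choose $\epsilon_1:=L/(4\log3)$, which gives $\theta^c=3^{-4\epsilon_1c}<3^{-\epsilon_1(a+c)}$. Membership in $V(\theta)$ then yields the required strict inequality for every member, uniformly in $(a,c)$.

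The main obstacle is therefore not analytic but bookkeeping. We must check that the family really meets the ratio-distinctness hypothesis "in both orders" and that a single $\epsilon_1$ serves uniformly, which the choice above does. After that, Theorem~\ref{thm:NKR} provides a member with $(3/2)^c\in\Z$. This contradicts $2^c\nmid 3^c$ for $c\ge2$, so $\Grp_0$ is finite.
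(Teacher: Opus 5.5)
Your proposal is correct and follows the paper's proof essentially verbatim. It uses the same data $(\alpha_1,\alpha_2)=(1,-1)$ and $(u_1,u_2)=((3/2)^c,(3/2)^a)$, verifies the side conditions of Theorem~\ref{thm:NKR} in the same way, and ends with the same parity contradiction from $(3/2)^c\in\Z$. The only cosmetic differences are that the paper takes $\epsilon_1=L/(2\log 3)$, where your $L/(4\log 3)$ works equally well since $a+c<2c<4c$, and that it checks strict positivity by an inline parity computation rather than citing Lemma~\ref{lem:floor}.
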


\begin{proof}[Proof]
Suppose $\Grp_0$ is infinite. To each $(a,c)\in\Grp_0$ associate the pair
\[
   (u_1,u_2):=\big((3/2)^{c},(3/2)^{a}\big)\in\Grp^{2},
   \qquad
   (\alpha_1,\alpha_2):=(1,-1),
\]
so that $\alpha_1u_1+\alpha_2u_2=(3/2)^{c}-(3/2)^{a}$.

\emph{The tuple hypotheses.} Both $|u_i|\ge1$ and $u_1\ne-u_2$ are immediate
from $u_1,u_2>0$. The ratios are pairwise distinct across the family: the gap
map is injective on $\Grp_0$ and
\[
   \frac{u_1}{u_2}=(3/2)^{\,c-a}
\]
is strictly increasing in the gap $c-a$, so distinct members of $\Grp_0$ give
distinct ratios --- in both orders, since $x\mapsto x^{-1}$ is injective.

\emph{The threshold.} As $\hgt\big((3/2)^{n}\big)=3^{n}$, the height product is
\[
   \hgt(u_1)\,\hgt(u_2)=3^{c}\cdot 3^{a}=3^{\,c+a}.
\]
Put $L:=\log\theta^{-1}>0$ and choose
\[
   \epsilon_1\;:=\;\frac{L}{2\log 3}\;>\;0 .
\]
Since $a<c$ we have $c+a<2c$, whence
\[
   \log\Big(\big(3^{\,c+a}\big)^{-\epsilon_1}\Big)
   \;=\;-\epsilon_1(c+a)\log 3
   \;=\;-\frac{(c+a)L}{2}
   \;>\;-cL
   \;=\;\log\big(\theta^{c}\big),
\]
and therefore, using $(a,c)\in V(\theta)$,
\[
   \dnorm{\alpha_1u_1+\alpha_2u_2}=\dnorm{(3/2)^{c}-(3/2)^{a}}
   \;\le\;\theta^{c}
   \;<\;\big(\hgt(u_1)\hgt(u_2)\big)^{-\epsilon_1},
\]
which is the required upper bound.

\emph{Strict positivity.} The distance is also nonzero. Were
$(3/2)^{c}-(3/2)^{a}=n$ for some $n\in\Z$, multiplication by $2^{c}$ would give
\[
   3^{c}-2^{\,c-a}3^{a}=2^{c}n,
\]
whose left-hand side is odd (because $c-a\ge1$) and whose right-hand side is
even (because $c\ge1$) --- a contradiction.

\emph{Conclusion.} Every hypothesis of Theorem~\ref{thm:NKR} is met by the
infinite family, so some member has both entries in $\Z$; in particular
$(3/2)^{c}\in\Z$ for some $c\ge2$. This is impossible, since
$2^{c}(3/2)^{c}=3^{c}$ is odd. Hence $\Grp_0$ is finite.
\end{proof}

\begin{theorem}[The gap dichotomy: {\rm(K)} holds]\label{thm:dichotomy}
For every rational $\theta\in(0,1)$ the set $V(\theta)$ is finite; that is, the
kernel {\rm(K)} holds.
\end{theorem}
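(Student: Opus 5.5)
We argue by contradiction: fix a rational $\theta\in(0,1)$ and suppose $V(\theta)$ is infinite. The plan is a dichotomy on the set of gaps $G:=\{c-a:(a,c)\in V(\theta)\}\subseteq\N_{\ge1}$, using Corollary~\ref{cor:gapbounded} for one branch and Lemma~\ref{lem:NKRbranch} for the other. (The huge-gap band of Theorem~\ref{thm:2bprime} is then not even needed, since the dichotomy covers all of $V(\theta)$ directly.)

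\emph{Case 1: $G$ is finite.} Put $S:=\max G$. Then every $(a,c)\in V(\theta)$ satisfies $c\le a+S$. By Corollary~\ref{cor:gapbounded} the set of such pairs is finite, so $V(\theta)$ is finite, which contradicts the assumption.

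\emph{Case 2: $G$ is infinite.} For each $s\in G$ choose one violator $(a_s,a_s+s)\in V(\theta)$; for definiteness take the one with least $a$. Let $\Grp_0:=\{(a_s,a_s+s):s\in G\}\subseteq V(\theta)$. By construction the gap map $(a,c)\mapsto c-a$ is injective on $\Grp_0$: two distinct chosen elements come from distinct $s$, and $c-a$ recovers $s$. Since $G$ is infinite, so is $\Grp_0$. Lemma~\ref{lem:NKRbranch} says $\Grp_0$ must be finite, a contradiction.

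Both cases are contradictory, so $V(\theta)$ is finite for every rational $\theta\in(0,1)$, which is (K). The only delicate point is the choice function in Case~2. Formally it is cleanest to avoid choice by using the least-$a$ witness, which exists by well-ordering of $\N$. One also checks that distinct gaps give distinct pairs, so $\Grp_0$ inherits infiniteness from $G$. All the Diophantine content is already packaged in Corollary~\ref{cor:gapbounded} and Lemma~\ref{lem:NKRbranch}, so no real obstacle remains beyond this bookkeeping.
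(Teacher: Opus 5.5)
Your proof is correct and follows the paper's own argument essentially verbatim: you use the same dichotomy on the image of the gap map, with Corollary~\ref{cor:gapbounded} closing the finite-image branch and Lemma~\ref{lem:NKRbranch}, applied to a one-violator-per-gap subfamily, closing the infinite one. Your observation that the huge-gap band (Theorem~\ref{thm:2bprime}) is not needed also matches the paper, whose proof of this theorem does not invoke it.
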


\begin{proof}[Proof]
Suppose $V(\theta)$ were infinite and consider the image of $V(\theta)$ under the
gap map $(a,c)\mapsto c-a$.
If that image is finite, then all violators have gap $\le S$ for some $S$, and
$V(\theta)$ is a gap-bounded slice, finite by Corollary~\ref{cor:gapbounded} ---
contradiction.
If the image is infinite, choose one violator per attained gap; this is an
infinite subfamily of $V(\theta)$ on which the gap map is injective, finite by
Lemma~\ref{lem:NKRbranch} --- contradiction.
Hence $V(\theta)$ is finite.
\end{proof}

\begin{theorem}[Main theorem]\label{thm:M4}
The subword complexity of the steering word is superlinear, $\pT(k)/k\to\infty$.
\end{theorem}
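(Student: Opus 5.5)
The main theorem is now a two-line assembly of results already in place, so the plan is to compose them. Theorem~\ref{thm:dichotomy} establishes the Diophantine kernel (K): for every rational $\theta\in(0,1)$ the violator set $V(\theta)$ is finite. Theorem~\ref{thm:reduction} states that (K) implies superlinearity of $\pT$ in the sense of Definition~\ref{def:kernel}: for every $C$ there is a $K$ with $\pT(k)>Ck$ for all $k\ge K$. Chaining the two gives that form of the statement directly. Equivalently, one can route through Theorem~\ref{thm:capstone}: finiteness of $V(\theta)$ trivially makes every middle band $M(\theta,\eps',S)$ finite, since $M\subseteq V(\theta)$, for any $S$.

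The only remaining step is to translate ``for every $C$ there is $K$ with $\pT(k)>Ck$ for $k\ge K$'' into $\pT(k)/k\to\infty$. This is the definition of divergence to $+\infty$ for the sequence $\pT(k)/k$, $k\ge1$. Given any real bound $B$, take $C=\lceil B\rceil$ (a natural number, matching how $C$ is quantified in the reduction). The resulting $K$ then gives $\pT(k)/k>C\ge B$ for all $k\ge\max(K,1)$.

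I do not expect any genuine obstacle here. The substantive difficulties are upstream: the CZ slices (Theorem~\ref{thm:2b}, Theorem~\ref{thm:2bprime}), the NKR branch (Lemma~\ref{lem:NKRbranch}) with its strict-positivity check, and the pigeonhole-plus-growth-ceiling argument inside Theorem~\ref{thm:reduction} that forces $c\to\infty$. The one point to handle carefully is bookkeeping about what the quantifier over $C$ ranges over, naturals versus reals or rationals. The reduction as proved fixes a rational $\theta$ with $\theta^{C+2}\ge 2/3$, and this works for any real $C>0$. So either reading is fine, and rounding $C$ up to an integer suffices.
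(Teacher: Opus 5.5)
Your proposal is correct and matches the paper's proof: Theorem~\ref{thm:dichotomy} supplies the kernel (K), and Theorem~\ref{thm:reduction} turns it into superlinearity. Your extra step, translating ``for every $C$ there is a $K$'' into $\pT(k)/k\to\infty$, is bookkeeping that the paper leaves implicit, since Theorem~\ref{thm:intro-main} already states the two forms as equivalent.
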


\begin{proof}[Proof]
Immediate from the gap dichotomy (Theorem~\ref{thm:dichotomy}), which supplies
the kernel (K), and the Stage-1 reduction (Theorem~\ref{thm:reduction}).
\end{proof}

\begin{corollary}\label{cor:middlefinite}
For every rational $\theta\in(0,1)$, every $\eps'>0$ and every $S$, the middle
band $M(\theta,\eps',S)$ is finite.
\end{corollary}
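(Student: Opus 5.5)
The corollary follows by a subset argument from Theorem~\ref{thm:dichotomy}. By Definition~\ref{def:middle}, for any rational $\theta\in(0,1)$, any $\eps'>0$ and any threshold $S$, the middle band is carved out of the violator set:
\[
M(\theta,\eps',S)=\{(a,c)\in V(\theta):\ \eps' c\le a,\ a+S\le c\}\subseteq V(\theta).
\]
The two extra conditions $\eps' c\le a$ and $a+S\le c$ only shrink the set. Theorem~\ref{thm:dichotomy} gives that $V(\theta)$ is finite for every rational $\theta\in(0,1)$. A subset of a finite set is finite, so $M(\theta,\eps',S)$ is finite.

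The corollary allows a real $\eps'>0$, not only a rational one. This needs no extra work: the argument never uses any property of $\eps'$ or $S$, because the inclusion into $V(\theta)$ holds for arbitrary values. The scale $\theta$ is rational, as Definition~\ref{def:kernel} and Theorem~\ref{thm:dichotomy} both require.

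There is no real obstacle here; all the work sits in Theorem~\ref{thm:dichotomy}. The result is worth recording because it discharges the hypothesis of Theorem~\ref{thm:capstone} in a strong, uniform form: finiteness holds for every threshold $S$, not merely for some suitably chosen one. This shows that the conditional main theorem and the unconditional Theorem~\ref{thm:M4} agree.
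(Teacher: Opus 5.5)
Your proposal is correct and is the paper's proof: by Definition~\ref{def:middle} you have $M(\theta,\eps',S)\subseteq V(\theta)$, and $V(\theta)$ is finite by Theorem~\ref{thm:dichotomy}. The paper likewise closes by noting that this discharges the hypothesis of Theorem~\ref{thm:capstone}.
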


\begin{proof}[Proof]
It is a subset of $V(\theta)$, finite by Theorem~\ref{thm:dichotomy}. In
particular the hypothesis of Theorem~\ref{thm:capstone} is discharged.
\end{proof}

\begin{remark}[Dependencies]\label{rem:status}
The results of Sections~\ref{sec:stage0} and \ref{sec:stage1} are
unconditional. Every remaining result --- the two slices, the gap dichotomy, and
the conditional and unconditional main theorems --- depends, beyond the ambient
logical framework, on exactly one external input: the Evertse--Schlickewei
$S$-arithmetic subspace theorem (\cite{Sch91}; see \cite[Ch.~7]{BG06} for the
form used), recorded once in the formal development. The theorems of
Corvaja--Zannier (Theorem~\ref{thm:CZ}) and of Nair--Kumar--Rout in repaired
form (Theorem~\ref{thm:NKR}) are derived from it inside the formalization, in
two and three variables respectively, and contribute no independent assumptions.
Every dependency is ineffective; no bound on the exceptional sets is obtained.
\end{remark}

\section{Implications and further directions}\label{sec:outlook}

The results above sit at the junction of three programmes and leave a different
residue in each.

\subsection{Diophantine approximation}

Detached from its dynamical origin, the kernel of Theorem~\ref{thm:dichotomy}
reads
\[
   \#\big\{(a,c):2\le a<c,\ \dnorm{(3/2)^{c}-(3/2)^{a}}\le\theta^{\,c}\big\}
   \;<\;\infty
   \qquad\text{for every }\theta<1 .
\]
Mahler's theorem, in the form used by Zudilin \cite{Zud07}, says that
$\dnorm{(3/2)^{k}}\le\theta^{\,k}$ has finitely many solutions for every
$\theta<1$; the kernel is exactly its two-parameter companion. The orbit of $1$
under $x\mapsto\tfrac32x$ does not merely repel the integers: its points repel
\emph{each other} modulo one at every exponential rate. Only the base
$\theta=\tfrac12$ of that statement is elementary (Lemma~\ref{lem:floor});
everything above it is Subspace.

\emph{Effectivity.} Every result from Section~\ref{sec:stage2b} onwards is
ineffective, so no explicit $K$ can be extracted in
Theorem~\ref{thm:intro-main}. For the one-parameter problem the effective
theory is in better shape, though it barely clears the trivial base: after Baker
and Coates, and Beukers, Zudilin \cite{Zud07} proved
$\dnorm{(3/2)^{k}}>0.5803^{\,k}$ for all $k$ beyond an effectively computable
bound, against the trivial $2^{-k}$. Nothing comparable is known for pairs. The
reduction of Theorem~\ref{thm:reduction} converts any effective slice of the
kernel directly into an effective complexity bound, and the exchange rate is
explicit.

\begin{problem}\label{prob:eff}
Exhibit a $\theta\in(0,1)$ and an effectively computable $A(\theta)$ such that
every pair $(a,c)\in V(\theta)$ satisfies $c\le A(\theta)$.
\end{problem}

Such a $\theta$ yields, by the proof of Theorem~\ref{thm:reduction}, the
effective bound $\pT(k)>C\,k$ for all explicitly large $k$, with
$C=\log(2/3)/\log\theta-2$. The threshold to beat is therefore
$\theta=(2/3)^{24/89}\approx0.896$: below it the certified slope $41/24$ of
Theorem~\ref{thm:t01} is already better, and at $\theta$ near Zudilin's
$0.5803$ the exchange rate returns nothing at all. The whole difficulty of
Problem~\ref{prob:eff} lies in that gap.

\emph{From qualitative to quantitative.} The unconditional bound
$p(n)/n\to\infty$ of \cite{AB07} was later sharpened by Bugeaud and Evertse
\cite{BE08}, who replaced the Subspace Theorem by its quantitative form and
obtained $p(n,\xi,b)\ge n(\log n)^{0.09}$ for infinitely many $n$. The
corresponding upgrade here is not automatic, and the obstruction can be named.
A quantitative Subspace Theorem bounds the number of \emph{subspaces} carrying
the solutions, not the number of solutions in each; in the proof of
Theorem~\ref{thm:NKR} every subspace is disposed of by an appeal to
Theorem~\ref{thm:CZ}, which is itself qualitative.

\begin{problem}\label{prob:quant}
Quantify Theorem~\ref{thm:CZ} sufficiently to count, subspace by subspace, the
pairs surviving in Theorem~\ref{thm:NKR}, and deduce a bound of the shape
$\pT(k)\ge k(\log k)^{\delta}$ for infinitely many $k$.
\end{problem}

\emph{Generality.} Only two features of $3/2$ were used: that
$\Grp=\langle2,3\rangle$ is finitely generated, and that the numerator of
$(3/2)^{n}$ is odd --- the source of every positivity step (Lemma~\ref{lem:floor},
Step~2 of Theorem~\ref{thm:2bprime}, Lemma~\ref{lem:NKRbranch}). Both survive
verbatim for coprime $p>q\ge2$ with $\Grp=\langle q,p\rangle$, and the
$\delta$-slot of Theorem~\ref{thm:CZ} accommodates a fixed rational multiplier.

\begin{problem}\label{prob:pq}
Prove superlinearity of the steering word of $\xi\,(p/q)^{n}$ for every
$\xi\in\Q^{\times}$ and all coprime $p>q\ge2$. The growth ceiling becomes
$k\le(c+1)\log_{q}(p/q)$, so the analogue of Theorem~\ref{thm:t01} carries the
constant $\log q/\log(p/q)$.
\end{problem}

Finally, Theorem~\ref{thm:NKR} is proved here only in the shape it is used:
over $\Q$, for pairs, with $\Grp=\langle2,3\rangle$. Whether the repaired
statement --- with the strict-positivity hypothesis restored --- holds in the
generality announced in \cite{NKR25}, over number fields and for $m$-tuples,
is open; the endgame of the proof sketch is specific to pairs. The authors'
correction (Remark~\ref{rem:NKRfix}) is asserted in that full generality, but
with the weaker, denominator-bounding conclusion; we have neither verified nor
formalized it. Which of the two repairs the argument of \cite[\S4]{NKR25}
actually supports, and whether the stronger one survives at all beyond pairs
over $\Q$, is the natural question left by the episode. An $m$-tuple
version would supply higher-order repulsion, but exploiting it would also
require a combinatorial step the present argument lacks: the pigeonhole of
Theorem~\ref{thm:reduction} extracts coincidences two at a time.

\subsection{Symbolic dynamics}

The sharpest consequence for this field is the dichotomy of
Section~\ref{sec:landscape}: if $\big((3/2)^n\big)$ is dense modulo one then
$T$ is not morphic, so a morphism generating $T$ would refute the density
conjecture. Theorem~\ref{thm:intro-main} is what makes this non-vacuous, having
removed the classes $\Theta(1)$ and $\Theta(k)$ from Pansiot's list; the
remaining candidates are $\Theta(k\log\log k)$, $\Theta(k\log k)$ and
$\Theta(k^{2})$. A systematic search for a morphism generating the observed
prefix of $T$ is therefore a well-posed experiment with an asymmetric payoff:
a hit would settle a hard number-theoretic conjecture in the negative, while
failure accumulates evidence for density.

The asymptotics are best phrased through the topological entropy of the
subshift generated by $T$,
\[
   h(T)\;=\;\lim_{k\to\infty}\frac{\log \pT(k)}{k},
\]
which exists by subadditivity. Our results say nothing about it:
superlinearity is compatible with $h(T)=0$. What is known is the bracket
$h(T)\le\log3$ from Kopra's trace subshift \cite{Kop21}, while
Corollary~\ref{cor:dense} gives $h(T)\ge\log(3/2)$ under density. Contrapositively,
\emph{a proof that the steering subshift has zero entropy would refute the density of
$\big((3/2)^n\big)$ modulo one} --- a purely symbolic-dynamical route into the
problem, and one on which the trivial ceiling $\pT(k)\le5^{k}$ is silent.

\begin{problem}\label{prob:entropy}
Decide whether $h(T)>0$. More modestly, close the gap between the base $3$ of
Kopra's ceiling and the base $3/2$ that density would demand.
\end{problem}

Two structural obstacles are worth recording for anyone who takes this up.
First, Proposition~\ref{prop:packing} is one-way: a length-$k$ factor is
governed not by $\eps_a$ alone but by the pair $(\eps_a,m_a\bmod 2^{k})$, so the
natural phase space is $[-\tfrac12,\tfrac12)\times\Z_2$ rather than the
interval, and it is there that a converse --- factors corresponding to separated
points --- would have to be sought. Establishing one would make the transfer
between complexity and equidistribution two-way, which it currently is not.
Second, the parity word $b$ resists the method for a reason isolated in the
remark after Proposition~\ref{prop:parity}: a minimal desynchronization
$|t_{c+j}-t_{a+j}|=2$ carries no constraint on $|\eps_c-\eps_a|$, so
Lemma~\ref{lem:contraction} has no $b$-analogue. Any unconditional superlinear
bound for $p_b$ needs a substitute --- plausibly a statistical bound on how often
minimal desynchronization can occur along a window.

\subsection{Formal verification of number theory}

That the formalization refuted a printed theorem is the outcome most worth
generalizing. The defect in \cite[Thm.~1.3(i)]{NKR25} is a uniformity slip: the
exponent produced in the proof depends on the tuple, whereas the lemma it feeds
requires a fixed one, so the printed statement claims more than its argument
delivers. There is accordingly more than one way to repair it --- restore the
missing hypothesis, as in Theorem~\ref{thm:NKR}, or weaken the conclusion, as
the authors do in the correction they supplied on being shown the
counterexample (Remark~\ref{rem:NKRfix}) --- and the episode is worth recording
in full for that reason: refutation identifies the false statement, not the
intended one, and only the authors can say which repair their proof was
reaching for. Errors of this shape are what human refereeing handles least well
and what formalization catches automatically, because the proof assistant will
not let a quantifier order remain implicit. Deep Diophantine machinery is
almost always applied through interfaces bristling with such side conditions ---
fixed versus varying exponents, height normalizations, strict versus non-strict
positivity --- which suggests that formalizing the \emph{statements} of the major
theorems, with their hypotheses, may repay the effort faster than formalizing
their proofs.

The same episode carries a warning about the practice of recording literature
results as axioms. An earlier state of this development axiomatized
\cite[Thm.~1.3(i)]{NKR25} verbatim; since that statement is false, the
development was inconsistent while the axiom stood, and every theorem depending
on it was vacuous. Faithful transcription is no protection --- it is precisely
what propagates the error. The discipline we ended with is the one we would
recommend: reduce to a small number of canonical inputs and \emph{derive} the
rest, so that the trust boundary is a single statement (here
\texttt{Subspace.evertseSchlickewei}) that \texttt{\#print axioms} exhibits
mechanically; and attempt to refute anything that must nonetheless be
axiomatized, since a successful refutation --- as here --- is far more
informative than a failed one.

What this required, and could not obtain, was library support. Mathlib
currently offers Liouville's theorem, Dirichlet's approximation theorem,
continued fractions and, recently, a theory of heights with the Northcott
property; it contains neither Roth's theorem nor any form of the Subspace
Theorem. The chain Roth $\to$ Schmidt $\to$ Evertse--Schlickewei is thus the
missing infrastructure, and the present work is one measure of the demand for
it: two theorems from the literature and one research paper all rest on a
single unformalized input. Problem~\ref{prob:quant} would additionally require
the quantitative versions.

\begin{problem}\label{prob:roth}
Formalize Roth's theorem, and beyond it the Schmidt Subspace Theorem and its
$S$-arithmetic form, so that developments of this kind can discharge their last
axiom.
\end{problem}

One limitation deserves emphasis, since it is easily mistaken for a defect of
the formalization rather than of the mathematics. The formal statements are
finiteness assertions (\texttt{Set.Finite}), from which no witness can be
extracted; the development therefore neither adds effectivity nor conceals its
absence, and it faithfully reproduces the ineffectivity of its input. In
particular Problem~\ref{prob:eff} lies outside the reach of the present
formalization not because of how it was written, but because the Subspace
Theorem does not answer it.

\section{Acknowledgements}
The author thanks P.~S.~Nair, V.~Kumar and S.~S.~Rout for their prompt response
to the counterexample of Remark~\ref{rem:NKR} and for the corrected statement
recorded in Remark~\ref{rem:NKRfix}.

The author utilized Claude Code as an AI coding assistant to aid in the Lean 4 formalization of the proofs presented in this paper. The author directed and reviewed all generated code and takes full responsibility for the mathematical integrity and final content of the work.

\appendix
\section{Formal provenance}\label{app:lean}
\sloppy

Each numbered environment corresponds to a Lean~4 declaration in the accompanying
repository (see \texttt{https://github.com/rwst/Superlinear-Complexity}).
The table lists the formal name and the dependency beyond the
ambient framework (\texttt{std3} denotes the standard logical axioms
\texttt{propext}, \texttt{Classical.choice}, \texttt{Quot.sound}). ``S'' denotes
the single cited axiom of the development, \texttt{Subspace.evertseSchlickewei}:
the Evertse--Schlickewei $S$-arithmetic subspace theorem \cite{Sch91,BG06}.
Theorems~\ref{thm:CZ} and \ref{thm:NKR} are \emph{derived} inside the
development, in the $\Q$-specialized forms used here:
\texttt{CZ.pseudoPisot\_approx\_of\_subspace} from S in two variables, and
\texttt{NKR.sUnit\_pair\_integrality\_of\_subspace} (via the finiteness statement
\texttt{NKR.pair\_finite}) from S in three variables. The machine-checked
refutation of Theorem~1.3(i) as printed in \cite{NKR25}
(Remark~\ref{rem:NKR}) is \texttt{NKR.thm13i\_unrepaired\_false}, pure
\texttt{std3}; the authors' correction (Remark~\ref{rem:NKRfix}) is not
formalized.

\begin{center}\footnotesize\setlength{\tabcolsep}{4pt}
\begin{tabular}{lll}
\hline
Env. & Lean name & Status \\
\hline
Prop.~\ref{prop:packing}  & \texttt{TH.card\_le\_complexity\_of\_separated} & proved (std3) \\
---                       & \texttt{TH.OrbitDense} & definition \\
Cor.~\ref{cor:dense}      & \texttt{TH.complexity\_ge\_ceil\_of\_orbitDense} & proved (std3) \\
Def.~\ref{def:objects}    & \texttt{TH.m},\ \texttt{TH.eps},\ \texttt{TH.R},\ \texttt{TH.t},\ \texttt{TH.b} & definition \\
Def.~\ref{def:circuit}    & \texttt{TH.W} & definition \\
Lem.~\ref{lem:closed}     & \texttt{TH.W\_closed},\ \texttt{TH.circuit\_sum} & proved (std3) \\
Def.~\ref{def:rep}        & \texttt{TH.IsRepetition} & definition \\
Thm.~\ref{thm:lemmaR}     & \texttt{TH.lemmaR\_eps},\ \texttt{TH.lemmaR\_int} & proved (std3) \\
Cor.~\ref{cor:dvd}        & \texttt{TH.two\_pow\_dvd\_of\_repetition} & proved (std3) \\
                          & \texttt{TH.three\_pow\_dvd\_of\_repetition} & proved (std3) \\
Prop.~\ref{prop:ceiling}  & \texttt{TH.repetition\_pow\_le} & proved (std3) \\
Lem.~\ref{lem:contraction}& \texttt{TH.abs\_eps\_sub\_le\_of\_repetition} & proved (std3) \\
Lem.~\ref{lem:floor}      & \texttt{TH.one\_le\_two\_pow\_mul\_distToNearestInt\_orbit} & proved (std3) \\
Thm.~\ref{thm:aperiodic}  & \texttt{TH.not\_eventually\_periodic} & proved (std3) \\
Thm.~\ref{thm:t01}        & \texttt{TH.factor\_complexity\_lower} & proved (std3) \\
Prop.~\ref{prop:parity}   & \texttt{TH.b\_repetition\_dichotomy} & proved (std3) \\
Def.~\ref{def:kernel}     & \texttt{TH.kernelViolators},\ \texttt{TH.Kernel},\ \texttt{TH.Superlinear} & definition \\
Thm.~\ref{thm:reduction}  & \texttt{TH.superlinear\_of\_kernel} & proved (std3) \\
---                       & \texttt{Subspace.evertseSchlickewei} & cited axiom (S) \\
Thm.~\ref{thm:CZ}         & \texttt{CZ.pseudoPisot\_approx\_of\_subspace} & proved, std3 + S \\
Rem.~\ref{rem:CZspec}     & (\texttt{CZ.height23},\ \texttt{CZ.sval}) & definition \\
Thm.~\ref{thm:2b}         & \texttt{TH.boundedGap\_slice\_finite} & proved, std3 + S \\
Cor.~\ref{cor:gapbounded} & \texttt{TH.gapBounded\_slice\_finite} & proved, std3 + S \\
Thm.~\ref{thm:2bprime}    & \texttt{TH.hugeGap\_slice\_finite} & proved, std3 + S \\
Def.~\ref{def:middle}     & \texttt{TH.middleBandViolators} & definition \\
Thm.~\ref{thm:capstone}   & \texttt{TH.superlinear\_of\_middleBand} & proved, std3 + S \\
Thm.~\ref{thm:NKR}        & \texttt{NKR.sUnit\_pair\_integrality\_of\_subspace} & proved, std3 + S \\
Lem.~\ref{lem:NKRbranch}  & \texttt{TH.finite\_of\_gap\_injOn} & proved, std3 + S \\
Thm.~\ref{thm:dichotomy}  & \texttt{TH.pairRepulsion\_all},\ \texttt{TH.kernel\_holds} & proved, std3 + S \\
Thm.~\ref{thm:M4}         & \texttt{TH.complexity\_superlinear} & proved, std3 + S \\
Cor.~\ref{cor:middlefinite}& \texttt{TH.middleBandViolators\_finite} & proved, std3 + S \\
\hline
\end{tabular}
\end{center}

\medskip\noindent
The remark after Theorem~\ref{thm:2bprime} corresponds to
\texttt{TH.boundedGap\_repetition\_short} (proved, std3 + S); the remarks around
Theorem~\ref{thm:t01} and Proposition~\ref{prop:parity} to
\texttt{TH.power\_repetition\_bound}, \texttt{TH.eps\_sub\_of\_sync} and
\texttt{TH.desync\_eps\_lower} (all proved, std3). The material of
Section~\ref{sec:landscape} lives in \texttt{TH/OrbitPacking.lean}: besides the
two entries above it contains the intermediate form
\texttt{TH.complexity\_ge\_of\_orbitDense} ($M\le\pT(k)$ for every integer
$M<(3/2)^k$) and the trivial alphabet ceiling
\texttt{TH.complexity\_le\_five\_pow} ($\pT(k)\le5^k$), both proved and pure
\texttt{std3}. The derivations of
Theorems~\ref{thm:CZ} and \ref{thm:NKR} from S live in
\texttt{CITED/CorvajaZannierProof.lean}, \texttt{CITED/NairKumarRoutLemmas.lean}
and \texttt{CITED/NairKumarRoutProof.lean}; their supporting lemmas are
documented there and not itemized here.


\section{Numerical exploration}\label{app:data}

The steering word is exactly computable, and it repays a look. Everything in
this appendix was produced from the division-free integer recurrence
\[
   m_{n+1}=\frac{3m_n+t_n}{2},\qquad
   R_{n+1}=3R_n-2^{\,n}t_n,\qquad
   \eps_n=\frac{R_n}{2^{\,n}},
\]
in which $t_n$ is the unique integer with $\eps_{n+1}\in[-\tfrac12,\tfrac12)$ and
$t_n\equiv m_n\pmod 2$; no floating-point arithmetic enters the generation of
$T$. We use the first $N=10^{6}$ letters. Write $\pT^{(N)}(k)$ for the number of
distinct length-$k$ factors of the prefix $t_0t_1\cdots t_{N-1}$. Two properties
of this statistic govern how it may be read: it is a \emph{lower} bound for
$\pT(k)$, every factor of a prefix being a factor of $T$; and it can never
exceed $N-k+1$, so it stops being informative as soon as $\pT(k)$ approaches
$N$. Nothing in this appendix belongs to the formal development, and
Conjecture~\ref{conj:exact} is a conjecture, not a theorem.

\subsection*{Letters and pairs}

Suppose $\eps_n$ equidistributes in $[-\tfrac12,\tfrac12)$ and the parity
$m_n\bmod2$ behaves like an independent fair coin. Since $t_n$ is one of the two
integers in $(3\eps_n-1,3\eps_n+1]$, selected between them by the parity
constraint, the model predicts the letter $0$ with probability $\tfrac13$, the
letters $\pm1$ with probability $\tfrac14$ and the letters $\pm2$ with
probability $\tfrac1{12}$. Measured over $10^{6}$ letters:

\begin{center}\footnotesize
\begin{tabular}{lccccc}
\hline
letter & $-2$ & $-1$ & $0$ & $1$ & $2$\\
\hline
measured & $0.08302$ & $0.25017$ & $0.33362$ & $0.25028$ & $0.08292$\\
model    & $1/12$    & $1/4$     & $1/3$     & $1/4$     & $1/12$\\
\hline
\end{tabular}
\end{center}

\noindent
Each cell agrees to within $5\cdot10^{-4}$. The equidistribution the model
assumes is visible directly in the orbit: a $50$-bin histogram of $\eps_n$,
$n<10^{6}$, has occupancies between $19\,732$ and $20\,305$ against an expected
$20\,000$. Pairs of letters, by contrast, are constrained outright.

\begin{proposition}\label{prop:pairsum}
For every $n$, $|t_n+t_{n+1}|\le2$. Consequently the six pairs
$(\pm2,\pm2)$, $(\pm2,\pm1)$, $(\pm1,\pm2)$ with equal signs never occur, and
$\pT(2)=19$.
\end{proposition}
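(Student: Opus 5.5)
The plan is to telescope two steps of the identity $t_n=3\eps_n-2\eps_{n+1}$ and then read off the constraint from the window $\eps\in[-\tfrac12,\tfrac12)$. Adding the relations for $n$ and $n+1$ gives
\[
   t_n+t_{n+1}=3\eps_n+\eps_{n+1}-2\eps_{n+2}.
\]
Bounding the right-hand side crudely only yields $|t_n+t_{n+1}|<3$ in a weak form, so the plan is to eliminate $\eps_{n+1}$ instead, using $\eps_{n+1}=(3\eps_n-t_n)/2$. This gives
\[
   t_n+t_{n+1}=\tfrac{9}{2}\eps_n-\tfrac12 t_n-2\eps_{n+2},
\]
which is still messy. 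A cleaner route is to treat the pair $(t_n,t_{n+1})$ directly. Since $|t_n|\le2$ and $|t_{n+1}|\le 2$, it suffices to rule out the equal-sign pairs with $|t_n|+|t_{n+1}|\ge3$, and by the symmetry $\eps\mapsto-\eps$ (up to the half-open endpoint) it suffices to treat the positive ones.

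Step 1 (localize $\eps_{n+1}$ from $t_n$). From $2\eps_{n+1}=3\eps_n-t_n$ and $\eps_n<\tfrac12$ we get $2\eps_{n+1}<\tfrac32-t_n$. So $t_n=2$ forces $\eps_{n+1}<-\tfrac14$, and $t_n=1$ forces $\eps_{n+1}<\tfrac14$.

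Step 2 (bound $t_{n+1}$ from $\eps_{n+1}$). From $t_{n+1}=3\eps_{n+1}-2\eps_{n+2}$ and $\eps_{n+2}\ge-\tfrac12$ we get $t_{n+1}\le 3\eps_{n+1}+1$.
\begin{itemize}
\item If $t_n=2$, this gives $t_{n+1}<\tfrac14$, so $t_{n+1}\le0$.
\item If $t_n=1$, it gives $t_{n+1}<\tfrac74$, so $t_{n+1}\le1$.
\end{itemize}
Hence $t_n+t_{n+1}\le2$ whenever $t_n>0$, and trivially $t_n+t_{n+1}\le 0+2$ when $t_n\le0$.

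Step 3 (the negative side). Run the mirror estimates using $\eps_n\ge-\tfrac12$ and $\eps_{n+2}<\tfrac12$. The strict and non-strict inequalities now trade places: $t_n=-2$ gives $\eps_{n+1}\ge\tfrac14$, hence $t_{n+1}>3\eps_{n+1}-1\ge-\tfrac14$, so $t_{n+1}\ge0$. The case $t_n=-1$ is analogous and gives $t_{n+1}\ge-1$. This proves $|t_n+t_{n+1}|\le2$ and excludes exactly the six listed pairs.

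Step 4 (the count). Of the $25$ pairs over the five-letter alphabet, six are excluded, so $\pT(2)\le 19$. For equality, exhibit each of the remaining $19$ pairs in an explicit prefix of $T$, computed by the integer recurrence of this appendix.

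The main obstacle is this last existence half, which is a finite search rather than a proof idea. I would first check the computed prefix to see how far one must go to witness all $19$ pairs, in particular the rarer ones such as $(2,-2)$, $(-2,2)$, $(2,0)$ and $(0,\pm2)$. I would then cite the positions found, matching the paper's practice of certifying finite facts by computation.
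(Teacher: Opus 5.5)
Your proof is correct. The case analysis in Steps~1--3, though, is a detour built on a misjudgment: the crude bound you set aside is already the whole proof. From the half-open window one gets $-3<3\eps_n+\eps_{n+1}-2\eps_{n+2}<3$, with both inequalities strict. Since $t_n+t_{n+1}$ is an integer, the statement $|t_n+t_{n+1}|<3$ is the same as $|t_n+t_{n+1}|\le2$, and this single line is the paper's entire proof.

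Your conditional argument on $t_n$, which localizes $\eps_{n+1}$ and then bounds $t_{n+1}$, is valid. The strict and non-strict inequalities are placed correctly in all four cases $t_n=\pm1,\pm2$. It uses exactly the same ingredients as the direct bound, applied case by case, and yields nothing beyond it.

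Step~4 coincides with the paper. The bound $\pT(2)\le19$ comes from the six exclusions. The bound $\pT(2)\ge19$ comes from the computed prefix: the paper simply quotes the measured value $\pT^{(N)}(2)=19$, which is a lower bound for $\pT(2)$ because every factor of the prefix is a factor of $T$.
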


\begin{proof}[Proof]
Applying $t_n=3\eps_n-2\eps_{n+1}$ twice,
\[
   t_n+t_{n+1}=3\eps_n+\eps_{n+1}-2\eps_{n+2}.
\]
Since $-\tfrac12\le\eps_j<\tfrac12$ for every $j$, the right-hand side is
strictly below $\tfrac32+\tfrac12+1=3$ and strictly above
$-\tfrac32-\tfrac12-1=-3$; being an integer, it lies in $[-2,2]$. The excluded
pairs are exactly the six elements of $\{-2,\dots,2\}^{2}$ with
$|t_n+t_{n+1}|\ge3$, and $25-6=19$ is the measured value.
\end{proof}

The rule is not the whole story. The subshift of finite type it defines has
$75$ words of length $3$, whereas $\pT(3)=65$, so constraints of range~$3$ are
present as well; and it yields no better ceiling than
Section~\ref{sec:landscape} already has, its entropy being $\log3.935\ldots$
against Kopra's $\log3$.

\subsection*{The complexity profile}

\begin{figure}[htb]
\includegraphics[width=\linewidth]{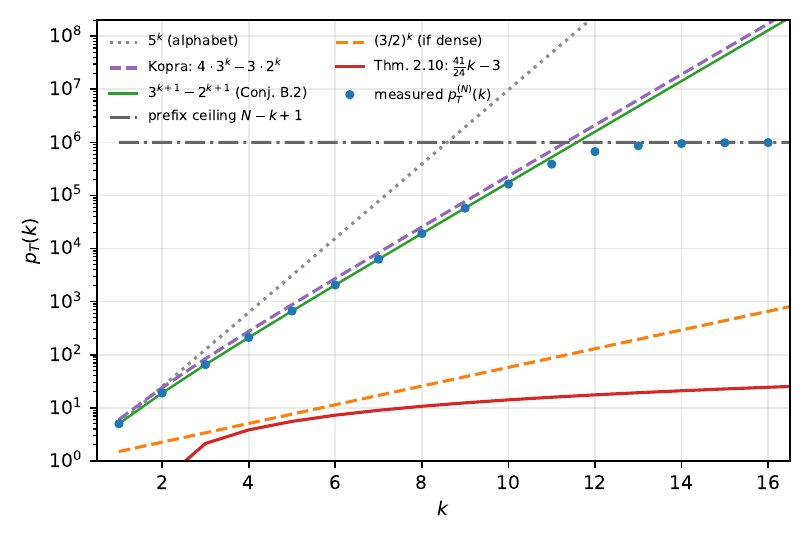}
\caption{Measured $\pT^{(N)}(k)$ at $N=10^{6}$ against the proven and
conjectural environment. The data follow the curve $3^{k+1}-2^{k+1}$ for as long
as $10^{6}$ letters suffice to exhibit every factor, fall away from it as
sampling fails, and finally saturate at the prefix ceiling $N-k+1$; both effects
are artefacts of the finite sample. Note the distance to the certified floor of
Theorem~\ref{thm:t01} and to the bound $(3/2)^k$ that density would force.}
\label{fig:complexity}
\end{figure}

The measured profile is, for $k\le9$,

\begin{center}\footnotesize
\begin{tabular}{lrrrrrrrrr}
\hline
$k$ & 1 & 2 & 3 & 4 & 5 & 6 & 7 & 8 & 9\\
\hline
$\pT^{(N)}(k)$      & 5 & 19 & 65 & 211 & 665 & 2059 & 6304 & 19138 & 57447\\
$3^{k+1}-2^{k+1}$   & 5 & 19 & 65 & 211 & 665 & 2059 & 6305 & 19171 & 58025\\
\hline
\end{tabular}
\end{center}

\begin{conjecture}\label{conj:exact}
$\pT(k)=3^{\,k+1}-2^{\,k+1}$ for every $k\ge1$.
\end{conjecture}

Agreement is exact for $1\le k\le6$. At $k=7$ and $k=8$ the prefix falls short
by $1$ and by $33$, which is the expected effect of rarity rather than evidence
against the formula: the letter frequencies are far from uniform, so a factor
built from extreme letters has probability about $12^{-k}$ per position --- at
$k=7$, some three hundredths of an occurrence in $10^{6}$ positions.
Figure~\ref{fig:ratio} exhibits the effect: for each $N$ the ratio
$\pT^{(N)}(k)/(3^{k+1}-2^{k+1})$ sits at exactly $1$ up to a threshold that
moves right as $N$ grows, and collapses beyond it.

\begin{figure}[htb]
\includegraphics[width=\linewidth]{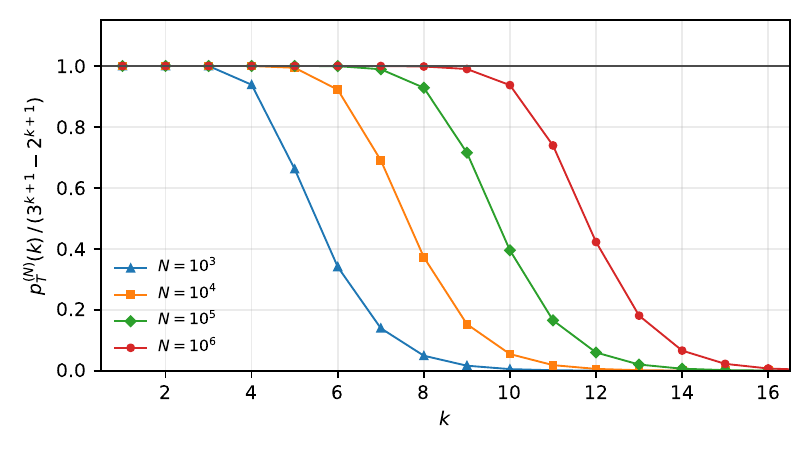}
\caption{$\pT^{(N)}(k)$ normalized by the conjectured value, for four prefix
lengths. The plateau at $1$ lengthens with $N$; the collapse to its right is
under-sampling, not a failure of the formula.}
\label{fig:ratio}
\end{figure}

Three things would follow from Conjecture~\ref{conj:exact}. The topological
entropy of the steering subshift would be exactly $\log3$, so the base of
Kopra's ceiling is sharp and only its constant ($4$ against $3$) is not; this
would settle Problem~\ref{prob:entropy} in both its parts, with
$h(T)=\log3>0$. The complexity would exceed by the exponential factor $2^{k}$ the
bound $\lceil(3/2)^{k}\rceil-1$ that density forces through
Corollary~\ref{cor:dense}; the $(3/2)^{k}$ there counts the $\eps$-coordinate
alone, and the missing $2^{k}$ is exactly the parity coordinate $m_a\bmod2^{k}$,
their product $3^{k}$ being one more reason to regard
$[-\tfrac12,\tfrac12)\times\Z_2$ as the right phase space
(Section~\ref{sec:outlook}). And the distance to what is proved is stark: at
$k=8$ the certified floor of Theorem~\ref{thm:t01} reads $10.7$ against a
measured $19\,138$.

\subsection*{Repetitions}

\begin{figure}[htb]
\includegraphics[width=\linewidth]{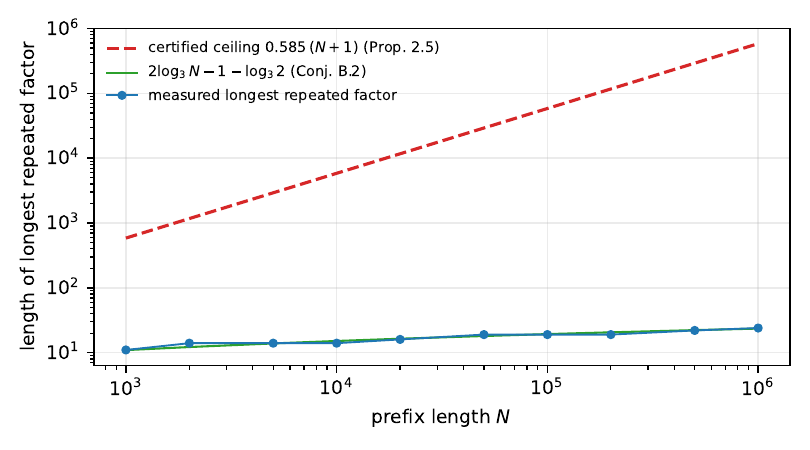}
\caption{Longest repeated factor in a prefix of length $N$, against the ceiling
certified by Proposition~\ref{prop:ceiling} and against the birthday prediction
of Conjecture~\ref{conj:exact}. The certified ceiling is linear in $N$; the
repetitions actually present die out logarithmically.}
\label{fig:repeat}
\end{figure}

Proposition~\ref{prop:ceiling} bounds repetition lengths linearly in the
position: inside a prefix of length $N$ it permits repeated factors of length up
to $0.585\,(N+1)$, that is $585\,000$ at $N=10^{6}$. The longest repeated factor
actually present there has length $24$. What the measurements track instead is
\[
   2\log_3N-\log_32-1,
\]
the length at which, for a word of complexity $3^{k+1}$, the expected number of
coinciding pairs among $N$ windows falls below one; the fit is within one letter
over three decades. This tests Conjecture~\ref{conj:exact} at $k\approx24$, far
outside the range in which $\pT(k)$ can be counted directly --- though it tests
the exponential base only, since replacing $3^{k+1}$ by Kopra's $4\cdot3^{k}$
would move the prediction by $\log_3(4/3)\approx0.26$, well below the resolution
of the data.

The four orders of magnitude between the two curves of
Figure~\ref{fig:repeat} are the visual form of the gap that
Sections~\ref{sec:stage1}--\ref{sec:stage2c} do not close. The mechanism of
Section~\ref{sec:stage0} forbids only very long repetitions, whereas in the word
itself repetitions die out logarithmically; an argument that saw the true decay
rate would deliver an exponential complexity bound, and the Diophantine input
used here sees only the linear ceiling.


\begin{thebibliography}{NKR25}

\bibitem[AB07]{AB07}
B.~Adamczewski and Y.~Bugeaud,
\emph{On the complexity of algebraic numbers~I. Expansions in integer bases},
Ann. of Math. (2) \textbf{165} (2007), no.~2, 547--565.

\bibitem[AS03]{AS03}
J.-P.~Allouche and J.~Shallit,
\emph{Automatic sequences: theory, applications, generalizations},
Cambridge University Press, Cambridge, 2003.

\bibitem[BE08]{BE08}
Y.~Bugeaud and J.-H.~Evertse,
\emph{On two notions of complexity of algebraic numbers},
Acta Arith. \textbf{133} (2008), no.~3, 221--250.

\bibitem[BG06]{BG06}
E.~Bombieri and W.~Gubler,
\emph{Heights in Diophantine geometry},
New Mathematical Monographs \textbf{4}, Cambridge University Press, Cambridge,
2006.

\bibitem[Cas03]{Cas03}
J.~Cassaigne,
\emph{Constructing infinite words of intermediate complexity},
Developments in Language Theory (DLT 2002), Lecture Notes in Comput. Sci.
\textbf{2450}, Springer, Berlin, 2003, 173--184.

\bibitem[CN10]{CN10}
J.~Cassaigne and F.~Nicolas,
\emph{Factor complexity},
in: V.~Berth\'e and M.~Rigo (eds.), Combinatorics, automata and number theory,
Encyclopedia Math. Appl. \textbf{135}, Cambridge University Press, Cambridge,
2010, 163--247.

\bibitem[Cob72]{Cob72}
A.~Cobham,
\emph{Uniform tag sequences},
Math. Systems Theory \textbf{6} (1972), 164--192.

\bibitem[CZ04]{CZ04}
P.~Corvaja and U.~Zannier,
\emph{On the rational approximations to the powers of an algebraic number:
solution of two problems of Mahler and Mend\`es France},
Acta Math. \textbf{193} (2004), no.~2, 175--191. arXiv:math/0403522.

\bibitem[Dub09]{Dub09}
A.~Dubickas,
\emph{On integer sequences generated by linear maps},
Glasg. Math. J. \textbf{51} (2009), no.~2, 243--252.

\bibitem[DN05]{DN05}
A.~Dubickas and A.~Novikas,
\emph{Integer parts of powers of rational numbers},
Math. Z. \textbf{251} (2005), no.~3, 635--648.

\bibitem[AFS08]{AFS08}
S.~Akiyama, C.~Frougny, and J.~Sakarovitch,
\emph{Powers of rationals modulo $1$ and rational base number systems},
Israel J. Math. \textbf{168} (2008), 53--91.

\bibitem[DHS99]{DHS99}
F.~Durand, B.~Host, and C.~Skau,
\emph{Substitution dynamical systems, Bratteli diagrams and dimension groups},
Ergodic Theory Dynam. Systems \textbf{19} (1999), no.~4, 953--993.

\bibitem[EM25]{EM25}
H.~Erazo and C.~G.~Moreira,
\emph{The Heinis spectrum has non-empty interior},
Combinatorics on Words (WORDS 2025), Lecture Notes in Comput. Sci., vol 15729. Springer,
Cham, 2025.

\bibitem[Fer99]{Fer99}
S.~Ferenczi,
\emph{Complexity of sequences and dynamical systems},
Discrete Math. \textbf{206} (1999), no.~1--3, 145--154.

\bibitem[FLP95]{FLP95}
L.~Flatto, J.~C.~Lagarias, and A.~D.~Pollington,
\emph{On the range of fractional parts $\{\xi(p/q)^n\}$},
Acta Arith. \textbf{70} (1995), no.~2, 125--147.

\bibitem[HL25]{HL25}
A.~Hilion and G.~Levitt,
\emph{A Pansiot-type subword complexity theorem for automorphisms of free
groups},
Israel J. Math. 270, 59–93 (2025). arXiv:2208.00676.

\bibitem[Kop21]{Kop21}
J.~Kopra,
\emph{On the trace subshifts of fractional multiplication automata},
Theoret. Comput. Sci. \textbf{851} (2021), 92--110.

\bibitem[Mah57]{Mah57}
K.~Mahler,
\emph{On the fractional parts of the powers of a rational number~II},
Mathematika \textbf{4} (1957), 122--124.

\bibitem[MH38]{MH38}
M.~Morse and G.~A.~Hedlund,
\emph{Symbolic dynamics},
Amer. J. Math. \textbf{60} (1938), no.~4, 815--866.

\bibitem[Pan84]{Pan84}
J.-J.~Pansiot,
\emph{Complexit\'e des facteurs des mots infinis engendr\'es par morphismes
it\'er\'es},
Automata, Languages and Programming (ICALP 1984), Lecture Notes in Comput. Sci.
\textbf{172}, Springer, Berlin, 1984, 380--389.

\bibitem[Puz25]{Puz25}
S.~Puzynina,
\emph{Complexity of infinite words},
Machines, Computations, and Universality (MCU 2024), Lecture Notes in Comput.
Sci. \textbf{15270}, Springer, Cham, 2025, 1--16.

\bibitem[NKR25]{NKR25}
P.~S.~Nair, V.~Kumar, and S.~S.~Rout,
\emph{Algebraic approximations to linear combinations of $S$-units},
arXiv:2506.02898 (v3, 18 Nov 2025).

\bibitem[Sch91]{Sch91}
Schmidt, Wolfgang M.,
\emph{Diophantine approximations and Diophantine equations.}
Springer, 2006.

\bibitem[Zud07]{Zud07}
W.~Zudilin,
\emph{A new lower bound for $\lVert(3/2)^k\rVert$},
J. Th\'eor. Nombres Bordeaux \textbf{19} (2007), no.~1, 311--323.

\end{thebibliography}
\end{document}